\newcommand{\field}[1]{\mathbb{#1}}
\newcommand{\C}{\field{C}}
\newcommand{\R}{\field{R}}
\newcommand{\be}{\begin{equation}}
\newcommand{\ee}{\end{equation}}
\newcommand{\I}{{\mathrm{i}}}
\title{Application of the inhomogeneous Lippmann-Schwinger equation to inverse scattering problems}
\author{Giovanni Giorgi\thanks{Dipartimento di Matematica, Universit\`a di Genova, via Dodecaneso 35, I-16146 Genova, Italy (giorgi$@$dima.unige.it).}\and Massimo Brignone\thanks{Azienda Ospedaliera Universitaria San Martino, Largo Benzi 10, I-16132 Genova, Italy (brignone$@$dima.unige.it).}\and Riccardo Aramini\thanks{Dipartimento di Matematica, Universit\`a di Genova, via Dodecaneso 35, I-16146 Genova, Italy (aramini$@$dima.unige.it).}\and Michele Piana\thanks{Dipartimento di Matematica, Universit\`a di Genova, via Dodecaneso 35, I-16146 Genova, Italy and CNR - SPIN, Genova, via Dodecaneso 33, I-16146 Genova, Italy (piana$@$dima.unige.it).
}}
\begin{document}

\maketitle

\begin{abstract}
In this paper we present a hybrid approach to numerically solve two-dimensional electromagnetic inverse scattering problems,
whereby the unknown scatterer is hosted by a possibly inhomogeneous background. The approach is `hybrid' in that it merges a
qualitative and a quantitative method to optimize the way of exploiting the \textit{a priori} information on the
background within the inversion procedure, thus improving the quality of the reconstruction and reducing the data amount necessary
for a satisfactory result. In the qualitative step, this \textit{a priori} knowledge is utilized to implement the linear sampling method in its
near-field formulation for an inhomogeneous background, in order to identify the region where the scatterer is located.
On the other hand, the same \textit{a priori} information is also encoded in the quantitative step by extending and applying the contrast source
inversion method
to what we call the `inhomogeneous Lippmann-Schwinger equation': the latter is a generalization of the classical Lippmann-Schwinger equation to the case of an inhomogeneous background, and in our paper is deduced from the differential formulation of the direct scattering problem to provide the
reconstruction algorithm with an appropriate theoretical basis. Then, the point values of the refractive index are computed only in the region identified by the linear sampling method at the previous step. The effectiveness of this hybrid approach is supported by numerical
simulations presented at the end of the paper.
\end{abstract}

\begin{keywords}
Inverse scattering, Lippmann-Schwinger equation, hybrid methods.
\end{keywords}

\begin{AMS}
78A46, 45Q05
\end{AMS}

\pagestyle{myheadings}
\thispagestyle{plain}
\markboth{THE INHOMOGENEOUS LIPPMANN-SCHWINGER EQUATION}{G. GIORGI, M. BRIGNONE, R. ARAMINI AND M. PIANA}

\section{Introduction}
Acoustic, elastic or electromagnetic scattering is a physical phenomenon where an incident wave is scattered by an inhomogeneity (or an obstacle) and the
total field at any point is represented by the sum of the incident and the scattered field. From a mathematical viewpoint, in the direct problem the physical parameters and the geometry of the inhomogeneity are given and the unknown is represented by the scattered field. In the inverse scattering problem, one aims at recovering the shape and the physical properties of the object of interest from measurements of the scattered field. The direct scattering problem is in general well-posed in the sense of Hadamard and therefore its approximate solution can be determined by means of stable numerical methods. On the contrary, the inverse scattering problem is ill-posed in the sense of Hadamard (specifically, the unknown physical parameters of the scatterer are mapped onto the measured scattered field by a compact operator), and its solution must be addressed by means of some
regularizing approach \cite{cokr98}.

Most inverse scattering methods belong to two different sets of algorithms: the family of qualitative approaches and the family of quantitative approaches. Qualitative methods \cite{caco06} require the regularized solution of a linear integral equation of the first kind, parameterized over
a grid of sampling points covering the investigation domain. The Euclidean norm of such regularized solution behaves as an indicator function of the unknown scatterer, since it is bounded when the sampling point is inside the target, grows up if the point approaches its edge and can
be made arbitrarily large when it is taken outside. The advantages of qualitative methods are that they are fast and need very few a priori
information to work. However, the informative content of their output is rather poor, since they are essentially visualization techniques
and do not provide quantitative reconstructions of the inhomogeneity.

On the other hand, quantitative inverse scattering methods \cite{bako04,cokr98} are, in general, iterative schemes that minimize an appropriate
functional, starting from an initialization mask for the refractive index of the scatterer, and
then reconstruct its point values after an optimized number of iterations. In principle, they can provide all the required information on the problem,
although by means of a notable computational effort. However, they can suffer from local minima problems, since the number of unknowns is typically larger than the data amount at disposal in scattering experiments, or because often the initialization of the method is not precise enough for a proper convergence to the solution. The effectiveness of a quantitative method can be notably increased by incorporating some \textit{a priori}
information (when available) on the scatterer into the minimization process. Although the most traditional approach is to use this knowledge
for a better initialization of the algorithm, an improvement in its performance can be achieved when such information is integrated into each step
of the iterative scheme, e.g., by decreasing the complexity of the target \cite{giletal09}, or by adding appropriate constraints to the minimization
technique \cite{vdbbrab99}.

Recent developments in inverse scattering are concerned with the formulation of hybrid methods,
i.e., methods merging different techniques in order to integrate and optimize the different kinds of information they can provide
(see e.g. \cite{zioibrido08,brcopi07,krse07}).
In particular, \textit{a priori} knowledge on the scatterer and/or qualitative techniques are typically utilized to
improve the performance of some quantitative method, e.g., by reducing
the number of unknowns, or by providing a more precise initialization.
In this paper, we rely on a rather general strategy for the formulation of hybrid techniques, based on the following three steps:
\begin{enumerate}
\item the \textit{a priori} information on the physical characteristics of the (possibly inhomogeneous) background is coded into the
      corresponding Green's function;
\item a segmentation between the background and the scatterer is realized by applying a qualitative method, thus reducing the size of the investigation domain, i.e., the number of unknowns in the following step;
\item a quantitative method is applied only in the region highlighted by the qualitative method, in order to reconstruct the point values of the refractive index in this region.
\end{enumerate}

Although this scheme is of general applicability, in this paper we present a specific implementation: more precisely, the Green's function of
the (inhomogeneous) background is numerically computed by means of the method of moments \cite{harrington,ri65}, which solves the forward
scattering problem. Then the linear sampling method \cite{copipo97} is applied in its near-field formulation for inhomogeneous backgrounds
\cite{cafaha06,como98} in order to visualize the scatterer of interest, and a procedure based on active contours \cite{arbrcopi08,chve01} is used for extracting the (approximate) shape of its support.
Finally, the inverse scattering problem is solved by generalizing and applying the contrast source inversion method \cite{giletal09,vdbkl97,vdbbrab99} to what we call the `inhomogeneous Lippmann-Schwinger equation', since
it is a generalization of the integral formulation of the direct scattering problem to the case of an inhomogeneous background:
such equation is deduced in our paper as a preliminary but essential tool that allows encoding the
\textit{a priori} information on the background into each step of the iterative reconstruction procedure.
Note that the initial qualitative approach allows computing the point values of the refractive index only in the region identified by the linear
sampling method, thus reducing the number of unknowns to be determined by the quantitative method.

The plan of the paper is as follows. 
In Section \ref{ziodifferenziale} we introduce the differential formulation of the direct scattering 
problem and recall some known results. Section \ref{zioinomogeneo} addresses the same problem from an integral perspective,  
and as a result the inhomogeneous Lippmann-Schwinger equation is derived from the direct differential formulation.
In Section \ref{zioibrido} we recall the key ideas of the linear sampling method in the near-field case, 
and adapt the contrast source inversion method to the inhomogeneous Lippmann-Schwinger equation. In Section \ref{zionumerico} 
some numerical examples are described, also including an application to breast cancer detection by using microwaves. 
Finally, our conclusions are offered in Section \ref{zioconcluso}.

\section{The scattering problem: differential formulation}\label{ziodifferenziale}

We consider a rather general two-dimensional and time-harmonic scattering problem, whereby the background medium is inhomogeneous and 
described by a piecewise continuously differentiable refractive index \cite{cokr98,como98}
\be\label{indice}
n_b(x)=\frac{1}{\varepsilon_0}\left[\varepsilon(x)+\mathrm{i}\frac{\sigma(x)}{\omega}\right].
\ee
In (\ref{indice}), $x=(x_1,x_2)$ is a generic point of $\R^2$, $\varepsilon_0>0$ is the dielectric permittivity of vacuum,
$\varepsilon(x)\geq 1$ and $\sigma(x)\geq 0$ are the point values of the dielectric permittivity and conductivity of the medium,
$\omega$ is the angular frequency,
and $\mathrm{i}=\sqrt{-1}$. More precisely (see e.g. fig. \ref{schema}), we assume that there exists a finite number $N+1$ of 
open and connected
$C^2$-domains $\Omega_i\subset\R^2$, with $i=0,\ldots,N$, such that 1) $\Omega_i\cap\Omega_j=\emptyset$ for $i\neq j$;
2) $\R^2=\cup_{i=0}^{N} \bar{\Omega}_i$; 3) $\Omega_i$ is bounded for each $i\neq 0$; 4) $n_b|_{\Omega_i} \in C^1(\bar{\Omega}_i)$ for all
$i=0,\ldots,N$; 5) $n_b(x)=\tilde{n}_0\in\C$ for $x\in\Omega_0$, with $\mathrm{Im}\{\tilde{n}_0\}\geq 0$; 6) there exists a subset $J_0$ of 
the finite set $\{1,2,\ldots,N\}$ such that $\tilde{\Omega}_0:=\mathrm{int}\left\{x\in\R^2 : n_b(x)=\tilde{n}_0 \right\}=
\Omega_0\cup\left(\cup_{j\in J_0}\Omega_j\right)$ (in particular, $J_0=\emptyset$ if and only if $\tilde{\Omega}_0=\Omega_0$).
Finally, we assume that the magnetic permeability is constant on all $\R^2$.
Note that a) the domains $\Omega_i$ do not need to be simply connected: e.g., in fig. \ref{schema}, $\Omega_3$ has $\Omega_4$ 
as its hole and, in turn, $\Omega_1$ has holes 
corresponding to $\Omega_2$ and $\Omega_3\cup\Omega_4$; b) the boundaries 
$\partial\Omega_i$ may be either curves where a discontinuity of $n_b$ occurs, or boundaries of virtual 
domains $\Omega_i$ immersed in a homogeneous region, where to host a scatterer that will be introduced later:
this trick allows some notational simplifications.

\begin{figure}[H]
\begin{center}
\epsfig{file=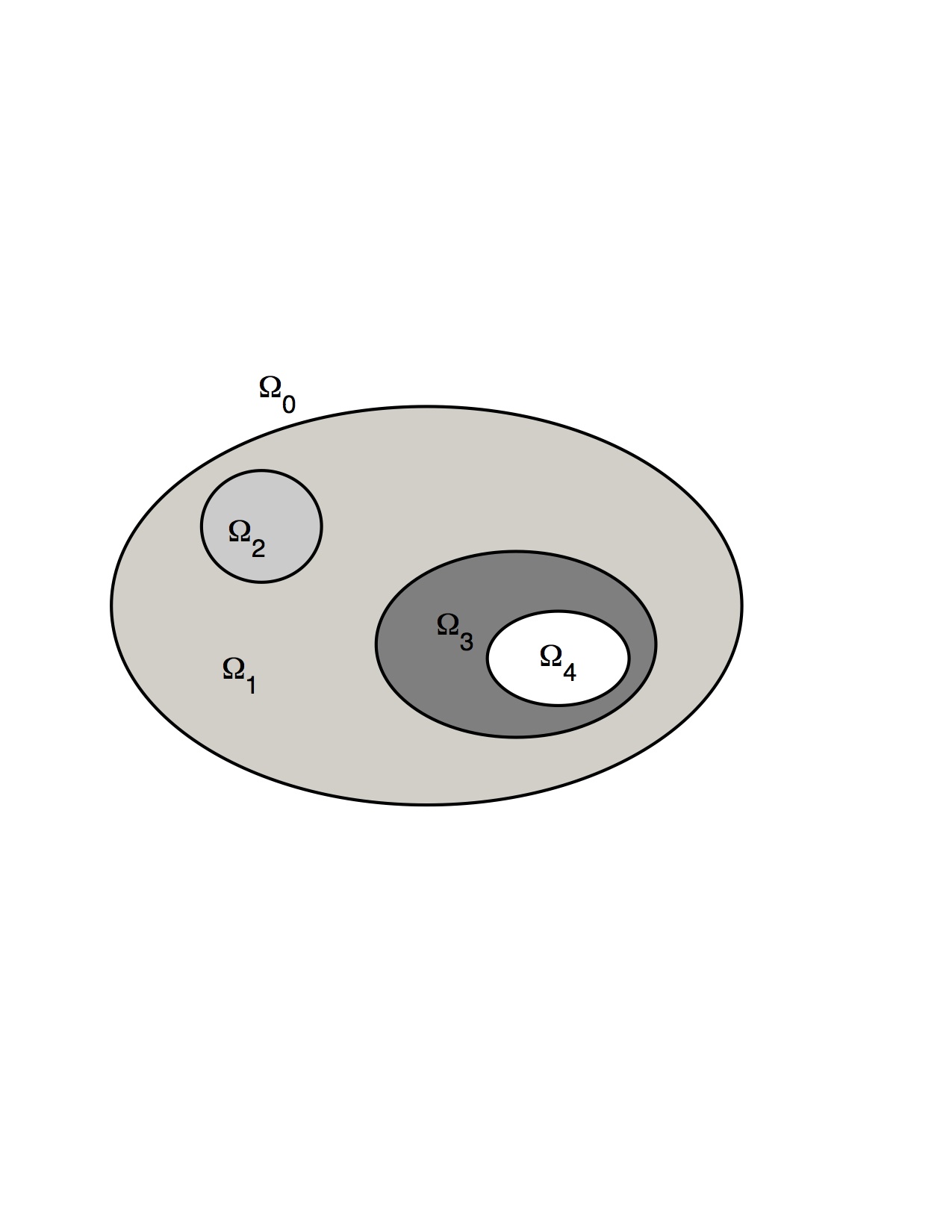, width=5.6cm}
\caption{\label{schema}Scheme of the reference background for the scattering problem. In this case, $\tilde{\Omega}_0=\Omega_0\cup\Omega_4$.}
\end{center}
\end{figure}

Next, we define the Green's function $G(x;y)$ of the background as the radiating solution of the equation \cite{como98}
\be\label{graun}
\Delta_x G(x;y) + k^2 n_b(x)\, G(x;y)=-\delta(x-y)\ \ \ \mbox{for}\ \ x\in\R^2,   
\ee
where $k=\omega/c$ is the wave number in vacuum, $c$ being the speed of light in free space.
The existence and uniqueness of $G(x;y)$ (at least for $y\in\tilde{\Omega}_0$)
can be proved as in \cite{cokr98,como98}: in fact, if we denote the Green's function of the homogeneous medium in $\tilde{\Omega}_0$
by $\Phi (x;y):=\frac{\mathrm{i}}{4} H^{(1)}_0(\tilde{k}_0\,|x-y|)$, where $H^{(1)}_0$ is the Hankel function of first kind of order zero, 
$\tilde{k}_0:=k\sqrt{\tilde{n}_0}$ and $\mathrm{Im}\{\sqrt{\tilde{n}_0}\}\geq 0$, we can write
\be\label{alluvione}
G(x;y)=\Phi(x;y)+u^s_b(x;y),
\ee
where $u^s_b(x;y)$ is the perturbation to $\Phi(x;y)$ due to the inhomogeneous medium in $\R^2\setminus\tilde{\Omega}_0$. 

We point out that \cite{como98} $G(\cdot;y)\in C^1(\R^2\setminus\{y\})$, i.e., the discontinuities of $n_b$ across some of 
the boundaries $\partial\Omega_i$ only affect the smoothness of second (and higher order) derivatives on the boundaries themselves. 
Actually, eq. (\ref{graun}) is to be understood as a set of $N+1$ equations, one for each domain $\Omega_i$, linked by 
proper transmission conditions at the boundaries $\partial \Omega_i$.
These conditions are imposed by physics, which states the continuity of the tangential components of both the total electric field $E$ 
and the magnetic field $H$: this is equivalent to the continuity of the vector fields $\nu\times E$ and $\nu\times H$, where `$\times$' 
denotes the vector product and $\nu$ is the unit normal at a point of $\partial\Omega_i$.
In our two-dimensional setting, Cartesian axes are chosen so that $G$ is the non-zero component of the total electric field
$E=(0,0,G)$, which vibrates perpendicularly to the scattering plane (i.e., the electromagnetic field is assumed to be TM-polarized). 
Moreover, from the time-harmonic Maxwell 
equation \cite{cokr98} $\mathrm{curl} E - \mathrm{i}k H=0$, it follows that $H=\frac{1}{\mathrm{i}k}(\partial_2 G, -\partial_1 G,0)$,
having denoted by $\partial_j$ the partial derivative with respect to the variable $x_j$, for $j=1,2$. Finally, in the same reference
system, the normal $\nu$ can be written as $(\nu_1,\nu_2,0)$: accordingly, $\nu\times E=(\nu_2\, G,-\nu_1\, G,0)$ and 
$\nu\times H=(0,0,-\nu_1\,\partial_1 G-\nu_2\,\partial_2 G)$. Then, the continuity
of $\nu\times E$ and $\nu\times H$ corresponds to the continuity of $G$ and its normal derivative
$\nu\cdot\nabla G=\partial_\nu G$.


In our scattering problem, the scatterer is assumed to take up the spatial region $D=\cup_{i=1}^M\Omega_i$, with $1\leq M\leq N$; accordingly, 
the whole propagation medium is described by a refractive index $n(x)$ such that, in general, $n(x)\neq n_b(x)$ for $x\in \Omega_i$, 
with $i=1,\ldots,M$. In any case, we still require that $n|_{\Omega_i}\in C^1(\bar{\Omega}_i)$ for all $i=0,\ldots,N$.
Moreover, for a unit point source placed at $x_0\in\tilde{\Omega}_0\setminus \overline{(\tilde{\Omega}_0\cap D)}$,
we denote by $u(x;x_0)$ the non-zero component of the total electric field $E=(0,0,u)$, which, as before, vibrates perpendicularly
to the scattering plane. 
Then, the differential form of the scattering problem we are interested in is \cite{como98}
\begin{equation}\label{scatteringnear}
\left\{
\begin{array}{lclc}
\displaystyle\Delta_x u(x;x_0) +k^2 n(x)\, u(x;x_0)=-\delta(x-x_0) & & {\mathrm{for}}\ \ x\in\R^2 & {\mathrm{\ \ \ \ (a)}} \\ [2mm]
\displaystyle u(x;x_0)=u^i(x;x_0) + u^s(x;x_0) & & \phantom{{\mathrm{for}}\ \ x\in\R^2} & {\mathrm{\ \ \ \ (b)}} \\  [2mm]
\displaystyle \lim_{r\rightarrow\infty} \left[\sqrt{r}\left(\frac{\partial u^s}{\partial r}-{\mathrm{i}}\,
\tilde{k}_0 u^s\right)\right]=0,
& & & {\mathrm{\ \ \ \ (c)}}
\end{array}
\right.
\end{equation}
where $u^i(x;x_0)=G(x;x_0)$ is the incident field, $u^s(x;x_0)$ is the scattered field and (\ref{scatteringnear})(c) is the Sommerfeld 
radiation condition, which holds uniformly in all directions $\hat{x}=x/|x|$. If $\mathrm{Im}\{\tilde{n}_0\}>0$, such a condition can be 
relaxed \cite{como98} by only requiring the boundedness of $u^s(\cdot;x_0)$ in $\R^2$.

Note that, from eq. (\ref{alluvione}) and the identification $u^i(x;x_0)=G(x;x_0)$, 
we can rewrite eq. (\ref{scatteringnear})(b) as $u(x;x_0)= \Phi(x;x_0)+[u^s_b(x;x_0)+ u^s(x;x_0)]$: then, as before, the existence and 
uniqueness of a solution $u(\cdot;x_0)\in C^1(\R^2\setminus\{x_0\})$ to problem (\ref{scatteringnear}) can be proved as
in \cite{como98}, and eq. (\ref{scatteringnear})(a) is again to be regarded as a set of $N+1$ equations, one for each domain 
$\Omega_i$, with the above transmission conditions at the boundaries $\partial \Omega_i$.

Moreover, since both the incident field $u^i(\cdot;x_0)$ and the total field $u(\cdot;x_0)$ are in $C^1(\R^2\setminus\{x_0\})$, 
also the scattered field $u^s(\cdot;x_0)$, expressed by (\ref{scatteringnear})(b) as the difference $u(\cdot;x_0)-u^i(\cdot;x_0)$, is in 
$C^1(\R^2\setminus\{x_0\})$. Actually, it is not difficult to establish further regularity properties of $u^s(\cdot;x_0)$, i.e.,
$u^s(\cdot;x_0)\in C^1(\R^2)$ and $\Delta u^s(\cdot;x_0)\in C^1\left(\bar{\Omega}_i\right)$ for all $i=0,\ldots,N$. To this end,
we observe that a simple algebraic manipulation of eq. (\ref{scatteringnear})(a) yields
\be
\Delta_x u(x;x_0) + k^2 n_b(x) u(x;x_0)=-\delta(x-x_0) + k^2 \tilde{n}_0 \frac{n_b(x)-n(x)}{\tilde{n}_0} u(x;x_0),
\ee
i.e., by setting $m(x):=[n_b(x)-n(x)]/\tilde{n}_0$ and remembering that $\tilde{k}_0=k\sqrt{\tilde{n}_0}$,
\be\label{ruggero}
\Delta_x u(x;x_0) + k^2 n_b(x) u(x;x_0)=-\delta(x-x_0) + \tilde{k}_0^2 m(x) u(x;x_0).
\ee
By using equations (\ref{graun}) and (\ref{scatteringnear})(b), as well as the above identification $u^i(x;x_0)=G(x;x_0)$, 
eq. (\ref{ruggero}) is easily seen to become
\be\label{errani}
\Delta_x u^s(x;x_0) + k^2 n_b(x) u^s(x;x_0)=-\left[-\tilde{k}_0^2 m(x) u(x;x_0)\right].
\ee
The structure of eq. (\ref{errani}), which parallels that of eq. (\ref{graun}), allows regarding $u^s(\cdot;x_0)$ as the third 
component of the electric field $E^s=(0,0,u^s)$ radiated by the equivalent source 
$-\tilde{k}_0^2 m(x) u(x;x_0)$ in the background medium described by the refractive index $n_b$.
We now note that this equivalent source is compactly supported, since $\mathrm{supp}\, m$ coincides with the support $\bar{D}$ 
of the scatterer, which is compact by assumption. On the other hand, by hypothesis, 
we have that $x_0\in\tilde{\Omega}_0\setminus \overline{(\tilde{\Omega}_0\cap D)}$, i.e., the point $x_0$ is at a finite distance 
from the equivalent source: accordingly, there exists an open neighbourhood $U_{x_0}\subset \tilde{\Omega}_0$ of $x_0$ such 
that 1) $U_{x_0}\cap \mathrm{supp}\, m=\emptyset$; 2) both the electric field $E^s$ and the corresponding magnetic field 
$H^s=\frac{1}{\mathrm{i}k}(\partial_2 u^s,-\partial_1 u^s,0)$ are well-defined and bounded in $U_{x_0}$. Even more, from the
transmission conditions imposed by physics and previously recalled, we can conclude that both $u^s(\cdot;x_0)$ and 
$\partial_{\nu}u^s(\cdot;x_0)$ are in $C^0(\bar{U}_{x_0})$. But here the unit normal is arbitrary, since no physical interface, i.e., no
discontinuity of the refractive index occurs inside $U_{x_0}$, where in fact $n_b(x)=\tilde{n}_0$: then, in particular, 
$\partial_j u^s(\cdot;x_0)\in C^0(\bar{U}_{x_0})$ for $j=1,2$. Accordingly, we have that $u^s(\cdot;x_0)\in C^1(\bar{U}_{x_0})$.
On the other hand, we know that $u^s(\cdot;x_0)\in C^1(\R^2\setminus
\{x_0\})$: we then conclude that $u^s(\cdot;x_0)\in C^1(\R^2)$.

Finally, from eq. (\ref{errani}) we find
\be\label{ridicola}
\Delta_x u^s(x;x_0) =- k^2 n_b(x) u^s(x;x_0)+\tilde{k}_0^2 m(x) u(x;x_0).
\ee
Since, by hypothesis, $n_b,m\in C^1(\bar{\Omega}_i)$ for all $i=0,\ldots,N$, while $u^s(\cdot;x_0)\in C^1(\R^2)$ and 
$u(\cdot;x_0)\in C^1(\R^2\setminus\{x_0\})$, from (\ref{ridicola}) we have $\Delta u^s(\cdot;x_0)\in C^1(\bar{\Omega}_i)$: indeed, the
singularity of $u(x;x_0)$ for $x=x_0$ is cancelled out by $m(x)$, since $x_0\notin \mathrm{supp}\, m$ by assumption, while $u^s(\cdot;x_0)$ 
is bounded at infinity (actually, it is bounded in all $\R^2$ \cite{como98}).

Of course, as a particular case (i.e., $n(x)=n_b(x)$ $\forall x\in\R^2$) of the above discussion, 
the same regularity properties also hold for the field 
$u^s_b(\cdot;y)$ introduced in (\ref{alluvione}), i.e., 
$u^s_b(\cdot;y)\in C^1(\R^2)$ and $\Delta u^s_b(\cdot;y)\in C^1\left(\bar{\Omega}_i\right)$ for all $i=0,\ldots,N$.


\section{The inhomogeneous Lippmann-Schwinger equation}\label{zioinomogeneo}

The goal of the present section is to derive an integral equation for the scattered field $u^s(\cdot;x_0)$ from the differential formulation
(\ref{scatteringnear}) of the scattering problem. 
Observing the structure of eq. (\ref{errani}), the natural candidate for an integral 
formulation is the `generalized' Lippmann-Schwinger equation
\be\label{lippus}
u^s(x;x_0)=-\tilde{k}_0^2\int_{\R^2}G(x;y)m(y)u(y;x_0)\,dy.
\ee
In fact, an exact equivalence between differential and integral formulation is proved in \cite{cokr98} for the three-dimensional acoustic 
and electromagnetic cases under strong regularity assumptions on the refractive index: the background is assumed to be homogeneous 
(i.e., $n_b(x)=\tilde{n}_0\in\C$ for all $x\in\R^3$), and $n$ must be continuously differentiable on the whole space 
(however, see 
\cite{ki11} for a variational approach notably relaxing the latter requirement). 
In particular, under such assumptions, this proof can provide a rigorous justification of the following procedure: compute 
$\Delta_x u^s(x;x_0)$ by interchanging the Laplacian operator with the integral in (\ref{lippus}), then replace
\be\label{peccato}
\Delta_x \Phi(x;y)=-\tilde{k}_0^2 \Phi(x;y)-\delta(x-y)
\ee
(which is the analogous of eq. (\ref{graun}) in the homogeneous case $G(x;y)=\Phi(x;y)$) into eq. (\ref{lippus}) to obtain
\be\label{incredibile}
\Delta_x u^s(x;x_0) + \tilde{k}_0^2 u^s(x;x_0)=-\left[-\tilde{k}_0^2 m(x)u(x;x_0)\right].
\ee
Eq. (\ref{incredibile}) suggests regarding the term in square brackets at its right-hand side as an equivalent, although unknown, 
source radiating the field $u^s(x;x_0)$ in the homogeneous background, which is consistent with the physical 
interpretation of (\ref{lippus}). In any case, by means of eq.s (\ref{peccato}) and (\ref{incredibile}), it is easily verified that $u^s$, 
as given by (\ref{lippus}), solves problem (\ref{scatteringnear}).

However, as correctly pointed out in \cite{ma03}, even a mere discontinuity of the physical parameters at the interface $\partial D$ between the
scatterer and the (homogeneous) background suffices to invalidate, in general, the previous procedure: as a consequence, even with a homogeneous
background, the integral formulation becomes more complicated (owing to the occurrence of boundary terms on $\partial D$) and its equivalence with
the differential formulation is more difficult to prove.

To our knowledge, in the case of an inhomogeneous background, results concerning the equivalence between the differential and integral formulation
of a scattering problem are not available. In this section, we shall limit ourselves to deriving the integral equation
(\ref{lippus}) (for $x\in\tilde{\Omega}_0$) from the differential problem (\ref{scatteringnear}):
indeed, a thorough discussion of the equivalence between (\ref{lippus}) and (\ref{scatteringnear}) would be very
technical and beyond the framework of this paper.
However, it is likely that an exact equivalence actually holds,
since, as recalled above, the scattered field $u^s$ is in $C^1(\R^2)$: then,
the boundary contributions on each $\partial \Omega_i$ that would appear in (\ref{lippus}) from Green's second theorem \cite{cokr98}
applied in each domain $\Omega_i$ cancel out, as detailed in the following Lemma \ref{salone} and Theorem \ref{teorema} for the case
$x\in\tilde{\Omega}_0$.

Finally, it is worth noting that, even in two dimensions, the previous argument fails \cite{ma03} whenever the magnetic permeability is discontinuous across $\partial\Omega_i$. This is analogous to what happens in the two or three-dimensional acoustic case \cite{ma03}, where the pressure field is continuous across the discontinuities of the fluid density, but not continuously differentiable.

\begin{lemma}\label{salone}
Let $\Omega_i$ (with $i=0,\ldots,N$), $\tilde{\Omega}_0$, $n_b(x)$, $G(x;y)$ be as above,
let $B_R:=\{x\in\R^2 : |x|< R\}$ be such that $B_R\supset \cup_{i=1}^N\bar{\Omega}_i$, and define
$\Omega_{N+1}:=B_R\setminus \cup_{i=1}^N\bar{\Omega}_i$. Moreover, let $w\in C^1 (\bar{B}_R)$ be such that
$\Delta w\in C^0(\bar{\Omega}_i)$ for all $i=1,\ldots,N+1$.
Then, the following generalization of Green's formula holds:
\begin{align}\label{generalizz}
w(x)=&\int_{\partial B_R}\left[\frac{\partial w}{\partial\nu}(y) G(y;x)-w(y)\frac{\partial G(y;x)}{\partial\nu(y)}\right]ds(y)+\\
 &-\int_{B_R}\left[\Delta w(y)+k^2 n_b(y) w(y)\right] G(y;x)\,dy\ \ \ \ \ \ \ \ \ \ \forall x\in B_R\cap\tilde{\Omega}_{0}.\nonumber
\end{align}
\end{lemma}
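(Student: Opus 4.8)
The plan is to read (\ref{generalizz}) as a Green's representation formula built on the background Green's function $G(\cdot;x)$, extracting its singularity at the source point $x$ by excision and exploiting the global $C^1$ regularity of both $w$ and $G$ to annihilate the spurious contributions on the interfaces $\partial\Omega_i$. First I would fix $x\in B_R\cap\tilde{\Omega}_0$ and choose $\varepsilon>0$ so small that $\overline{B_\varepsilon(x)}\subset\tilde{\Omega}_0$. Since $x\in\tilde{\Omega}_0$, the refractive index equals the constant $\tilde{n}_0$ on $B_\varepsilon(x)$, so that there $G(y;x)=\Phi(y;x)+u^s_b(y;x)$ with $u^s_b(\cdot;x)\in C^1$ and $\Phi(y;x)=\frac{\I}{4}H^{(1)}_0(\tilde{k}_0|y-x|)$. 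This localization of the singularity inside the homogeneous part is precisely where the hypothesis $x\in\tilde{\Omega}_0$ enters.

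Next I would apply Green's second identity to the pair $w$ and $G(\cdot;x)$ on each of the subdomains $\Omega_1,\dots,\Omega_{N+1}$ tiling $B_R$, removing $B_\varepsilon(x)$ from the single subdomain $\Omega_{i_0}$ that contains $x$. The regularity hypotheses are tailored to this: on each $\Omega_i$ one has $w\in C^1(\bar{\Omega}_i)$ with $\Delta w\in C^0(\bar{\Omega}_i)$, while $G(\cdot;x)\in C^1(\bar{\Omega}_i\setminus\{x\})$ and, by (\ref{graun}), $\Delta_y G(y;x)=-k^2 n_b(y)G(y;x)\in C^0(\bar{\Omega}_i\setminus\{x\})$ there, the Dirac term being absent away from $x$. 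On every $\Omega_i$ not containing $x$, substituting $\Delta_y G=-k^2 n_b G$ turns the volume contribution into $-\int_{\Omega_i}[\Delta w+k^2 n_b w]G\,dy$, and summing over $i$ rebuilds the volume integral over $B_R\setminus B_\varepsilon(x)$ appearing in (\ref{generalizz}).

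The crucial step is the cancellation of the interface terms. Every internal boundary piece is shared by two adjacent subdomains and is traversed with opposite outward normals in their respective identities; since $w\in C^1(\bar{B}_R)$ and $G(\cdot;x)\in C^1(\R^2\setminus\{x\})$, the four quantities $w$, $\partial_\nu w$, $G$, $\partial_\nu G$ all match across $\partial\Omega_i$, so the integrand $w\,\partial_\nu G-G\,\partial_\nu w$ is common to both sides while the normals are opposite, and the two contributions annihilate pairwise. After this cancellation only the outer boundary $\partial B_R$ (which lies in $\tilde{\Omega}_0$ and is met solely by $\Omega_{N+1}$) and the inner sphere $\partial B_\varepsilon(x)$ survive.

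Finally I would let $\varepsilon\to 0$. The volume integral over $B_R\setminus B_\varepsilon(x)$ converges to the one over $B_R$ because $G(\cdot;x)$ carries only the integrable logarithmic singularity of $\Phi$, whose leading term is $-\frac{1}{2\pi}\log|y-x|$, while $\Delta w+k^2 n_b w$ is bounded near $x$. On $\partial B_\varepsilon(x)$ the regular part $u^s_b$ contributes nothing in the limit (bounded integrand times vanishing length), and the classical local analysis of $\Phi$ yields $\lim_{\varepsilon\to 0}\int_{\partial B_\varepsilon(x)}[w\,\partial_\nu G-G\,\partial_\nu w]\,ds=w(x)$, the normal being outward from $\Omega_{i_0}\setminus\overline{B_\varepsilon(x)}$ and hence pointing toward $x$. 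Moving this isolated $w(x)$ to the left and matching signs on the $\partial B_R$ integrand produces exactly (\ref{generalizz}). I expect the only genuine obstacle to be the bookkeeping behind the interface cancellation, namely checking that each $\partial\Omega_i$ is counted with the correct pair of orientations and that the merely $C^1$ (not $C^2$) regularity of $G$ across the interfaces remains compatible with applying Green's identity subdomain by subdomain; the singularity extraction itself is entirely standard.
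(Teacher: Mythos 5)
Your proposal is correct and follows essentially the same route as the paper's proof: excise a small ball around $x$, apply Green's second identity on each subdomain, cancel the interface contributions using the $C^1$ continuity of $w$ and $G(\cdot;x)$ across the $\partial\Omega_i$, substitute $\Delta_y G=-k^2 n_b G$ away from $x$, and recover $w(x)$ in the limit from the logarithmic singularity of $\Phi$ while the regular part $u^s_b$ contributes nothing. The only cosmetic difference is that the paper treats $B_R\cap\tilde{\Omega}_0$ as a single (possibly disconnected) domain rather than splitting it into the individual $\Omega_i$ with $i\in J_0$ plus $\Omega_{N+1}$, which changes nothing of substance.
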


\begin{proof}
Consider an arbitrary point $x\in\Omega_R:=B_R\cap\tilde{\Omega}_{0}$: since $\Omega_R$ is open, there exist $\rho >0$ and a ball
$B(x;\rho):=\{y\in\R^2 : |y-x|<\rho\}$ such that $\bar{B}(x;\rho)\subset\Omega_{R}$. Moreover,
$G(\cdot;x)$ solves a particular case of the differential problem (\ref{scatteringnear}), with the identifications $x_0=x$, 
$u(\cdot;x_0)=G(\cdot;x)$, $u^i(\cdot;x_0)=\Phi(\cdot;x)$ and $n=n_b$: accordingly, 
remembering eq. (\ref{alluvione}) and the regularity 
properties stated in the previous section, we have that 
$G(\cdot;x)\in C^1\left(\bar{B}_R\setminus\{x\}\right)$,
$\Delta G(\cdot;x)\in C^1\left(\bar{\Omega}_{R}\setminus\{x\}\right)$ and
$\Delta G(\cdot;x)\in C^1(\bar{\Omega}_i)$ for all $i\in\{1,\ldots,N\}\setminus J_0$,
where the index set $J_0$ has been defined in assumption no. 6) soon below eq. (\ref{indice}).
Then, given the regularity properties assumed for $w$,
we can apply the usual Green's second theorem \cite{cokr98,kellogg} in the domain $\Omega_{R}\setminus\bar{B}(x;\rho)$, i.e.,
\begin{align}\label{allerta1}
&\int_{\Omega_{R}\setminus\bar{B}(x;\rho)}\left[G(y;x)\Delta w(y)-w(y)\Delta_y G(y;x)\right]dy=\\
&=\int_{\partial\Omega_{R}\cup\partial{B}(x;\rho)}\left[G(y;x)\frac{\partial w(y)}{\partial \nu} -
w(y)\frac{\partial G(y;x)}{\partial\nu(y)}\right]ds(y),\nonumber
\end{align}
as well as in any other domain $\Omega_i$, with $i\in\{1,\ldots,N\}\setminus J_0$, i.e.,
\begin{align}\label{allerta2}
&\int_{\Omega_{i}}\left[G(y;x)\Delta w(y)-w(y)\Delta_y G(y;x)\right]dy=\\
&=\int_{\partial\Omega_{i}}\left[G(y;x)\frac{\partial w(y)}{\partial \nu} -
w(y)\frac{\partial G(y;x)}{\partial\nu(y)}\right]ds(y).\nonumber
\end{align}
In eq.s (\ref{allerta1}) and (\ref{allerta2}), it is understood that the unit normal is chosen as outward with respect to each domain. We now sum eq.
(\ref{allerta1}) with all the equations (\ref{allerta2}) obtained for $i\in\{1,\ldots,N\}\setminus J_0$: note that, except for
$\partial B_R$ and $\partial{B}(x;\rho)$, all the boundary integrals are taken twice, with opposite orientation of the unit normal. Since the integrand functions are continuous on the boundaries, these integrals cancel out. Moreover, by eq. (\ref{graun}), we can substitute $\Delta_y G(y;x)=-k^2 n_b(y) G(y;x)$ for $y\neq x$ into
(\ref{allerta1}) and (\ref{allerta2}). As a result, we find
\begin{align}\label{allerta3}
&\int_{B_R\setminus\bar{B}(x;\rho)}\left[\Delta w(y)+k^2 n_b(y) w(y)\right]G(y;x)dy=\\
&=\int_{\partial B_R\cup\partial{B}(x;\rho)}\left[G(y;x)\frac{\partial w(y)}{\partial \nu} -
w(y)\frac{\partial G(y;x)}{\partial\nu(y)}\right]ds(y).\nonumber
\end{align}
We now focus on the integral over $\partial{B}(x;\rho)$, say $I_{\partial{B}(x;\rho)}$, at the right-hand side of (\ref{allerta3}): remembering eq. (\ref{alluvione}), we have
\begin{align}\label{pioggia}
I_{\partial{B}(x;\rho)}=&\int_{\partial{B}(x;\rho)}\left[\Phi(y;x)\frac{\partial w(y)}{\partial\nu}-w(y)\frac{\partial\Phi(y;x)}{\partial\nu(y)}\right]ds(y)+\\ \nonumber
&+\int_{\partial{B}(x;\rho)}\left[u_b^s(y;x)\frac{\partial w(y)}{\partial\nu}-w(y)\frac{\partial u_b^s(y;x)}{\partial\nu(y)}\right]ds(y).
\end{align}
Now, the second integral in (\ref{pioggia}) vanishes as $\rho\rightarrow 0$, since the integrand is bounded and the measure of the integration domain tends to zero. As regards the first integral in (\ref{pioggia}), we recall \cite{caco06} the following asymptotic behaviors
\begin{align}
\Phi(y;x) & =\frac{1}{2\pi}\ln\frac{1}{|x-y|}+O(1) & \mbox{as}\ \ |x-y|\rightarrow 0,\label{piu}\\
\frac{\partial \Phi(y;x)}{\partial\nu(y)} & =\frac{1}{2\pi}\frac{1}{|x-y|}+O(|x-y|\ln|x-y|) & \mbox{as}\ \ |x-y|\rightarrow 0.\label{meno}
\end{align}
By using (\ref{piu}) and (\ref{meno}), the integral mean value theorem applied to the first integral in (\ref{pioggia}) easily shows that the latter
tends to $-w(x)$ as $\rho\rightarrow 0$. Then, thesis (\ref{generalizz}) is obtained by taking $\rho\rightarrow 0$ in eq. (\ref{allerta3}): indeed, remembering eq. (\ref{alluvione}) and the regularity of $u_b^s(y;x)$, the singularity of $G(y;x)$ for $y\rightarrow x$ is only due to $\Phi(y;x)$, i.e., it is weak and then the integral on $B_R$ converges.
\end{proof}

\begin{theorem}\label{teorema}
Let $\Omega_i$ (with $i=0,\ldots,N+1$), $\tilde{\Omega}_0$, $D$, $n_b(x)$, $n(x)$, $m(x)$, $G(x;y)$ be as above, and let
$x_0\in\tilde{\Omega}_0\setminus \overline{(\tilde{\Omega}_0\cap D)}$ be as above.
If $u^s(\cdot;x_0)\in C^1(\R^2)$, with $\Delta u^s(\cdot;x_0)\in C^1\left(\bar{\Omega}_i\right)$ for all $i=0,\ldots,N$, is a solution of the differential problem (\ref{scatteringnear}), then $u^s(\cdot;x_0)$ solves the integral equation (\ref{lippus}) for $x\in\tilde{\Omega}_0$.
\end{theorem}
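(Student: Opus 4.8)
The plan is to apply Lemma \ref{salone} with the choice $w=u^s(\cdot;x_0)$ and then to let the outer radius $R$ tend to infinity. First I would verify that $u^s(\cdot;x_0)$ meets the hypotheses of the lemma: by assumption $u^s(\cdot;x_0)\in C^1(\R^2)\subset C^1(\bar{B}_R)$, and $\Delta u^s(\cdot;x_0)\in C^1(\bar{\Omega}_i)\subset C^0(\bar{\Omega}_i)$ for $i=0,\ldots,N$. For the extra domain $\Omega_{N+1}=B_R\setminus\cup_{i=1}^N\bar{\Omega}_i$ I would note that, since the bounded domains $\Omega_1,\ldots,\Omega_N$ all lie inside $B_R$ and $\R^2=\cup_{i=0}^N\bar{\Omega}_i$, one has $\bar{\Omega}_{N+1}\subset\bar{\Omega}_0$, whence $\Delta u^s(\cdot;x_0)\in C^0(\bar{\Omega}_{N+1})$ as well. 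Thus (\ref{generalizz}) applies and represents $u^s(x;x_0)$, for every $x\in B_R\cap\tilde{\Omega}_0$, as the sum of a boundary integral over $\partial B_R$ and the volume integral $-\int_{B_R}[\Delta u^s(y;x_0)+k^2 n_b(y)u^s(y;x_0)]\,G(y;x)\,dy$.

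Next I would rewrite the volume term using eq. (\ref{errani}), which gives exactly $\Delta u^s(y;x_0)+k^2 n_b(y)u^s(y;x_0)=\tilde{k}_0^2 m(y)u(y;x_0)$, so that the volume contribution becomes $-\tilde{k}_0^2\int_{B_R}G(y;x)m(y)u(y;x_0)\,dy$; since $\mathrm{supp}\,m=\bar{D}\subset B_R$, the integral may be extended to all of $\R^2$ without change. Invoking the reciprocity of the background Green's function, $G(y;x)=G(x;y)$ (which follows from the symmetry of the operator $\Delta+k^2 n_b$ together with the radiation condition, and can itself be obtained by a Green's-formula argument analogous to Lemma \ref{salone}), this term coincides with the right-hand side of the target equation (\ref{lippus}). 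It then remains only to show that the boundary integral over $\partial B_R$ vanishes as $R\to\infty$.

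The vanishing of this boundary term is the main obstacle, and it is precisely where the radiation condition (\ref{scatteringnear})(c) enters. Outside a sufficiently large ball one has $n_b\equiv\tilde{n}_0$ and $m\equiv 0$, so that both $u^s(\cdot;x_0)$ and $G(\cdot;x)$ are radiating solutions of the homogeneous Helmholtz equation with wave number $\tilde{k}_0$ and satisfy the Sommerfeld condition. On $\partial B_R$ the outward normal is radial, and by adding and subtracting $\I\tilde{k}_0\,u^s G$ I would rewrite the integrand $\partial_\nu u^s\,G-u^s\,\partial_\nu G$ as $[\partial_\nu u^s-\I\tilde{k}_0 u^s]\,G-u^s\,[\partial_\nu G-\I\tilde{k}_0 G]$. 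By (\ref{scatteringnear})(c) each bracket is $o(R^{-1/2})$, while $u^s$ and $G$ are themselves $O(R^{-1/2})$; hence the integrand is $o(R^{-1})$, and integrating over the circle $\partial B_R$ of length $2\pi R$ gives $o(1)$. When $\mathrm{Im}\{\tilde{n}_0\}>0$ the argument is even simpler, since both fields then decay exponentially and the boundary integral vanishes trivially. Letting $R\to\infty$ in the representation obtained above (for fixed $x\in\tilde{\Omega}_0$ and $R$ large enough to contain both $x$ and $\bar{D}$) yields (\ref{lippus}) for every $x\in\tilde{\Omega}_0$, which is the thesis.
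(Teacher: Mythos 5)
Your proof is correct and follows essentially the same route as the paper: apply Lemma \ref{salone} to $w=u^s(\cdot;x_0)$, substitute eq. (\ref{errani}) into the volume term, invoke reciprocity $G(y;x)=G(x;y)$, and kill the $\partial B_R$ contribution via the radiation condition. The only immaterial difference is that you estimate that boundary integral directly by the classical Sommerfeld rearrangement $[\partial_\nu u^s-\I\tilde{k}_0 u^s]G-u^s[\partial_\nu G-\I\tilde{k}_0 G]$, whereas the paper first transfers it to $\partial B_r$ with $r>R$ via Green's theorem and then uses the far-field asymptotics (\ref{zioperplesso}); both exploit the same leading-order cancellation.
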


\begin{proof}
Consider an arbitrary point $x\in\tilde{\Omega}_0$. Let $B_R:=\{x\in\R^2 : |x|<R\}$ be an open disk with exterior unit normal $\nu$ and radius
$R$ large enough, so that $B_R\supset\left[\cup_{i=1}^N\bar{\Omega}_i\cup\{x\}\right]$.
By hypothesis, $u^s(\cdot;x_0)$ is regular enough in the domain $B_R$ to be represented by means of the generalized Green's formula (\ref{generalizz}) in $\Omega_{R}:=B_R\cap\tilde{\Omega}_0$, i.e.,
\begin{align}
u^s(x;x_0) = &\int_{\partial B_R}\left\{\frac{\partial u^s(y;x_0)}{\partial\nu(y)}G(y;x)-u^s(y;x_0)
\frac{\partial G(y;x)}{\partial\nu(y)}\right\} ds(y) + \label{quiz}\\
& - \int_{B_R} \left\{\Delta_y u^s(y;x_0)+k^2 n_b(y) u^s(y;x_0)\right\} G(y;x) dy,\ \ \ \ \ \   x \in \Omega_{R}.\nonumber
\end{align}
First we prove that the integral on $\partial B_R$ in (\ref{quiz}) is zero. To this end, consider $B_r:=\{x\in\R^2 : |x|<r\}$ such that $r>R$, and apply Green's second theorem \cite{cokr98} in the domain $B_r\setminus B_R$: by choosing the unit normal $\nu$ as outgoing from both $B_R$ and $B_r$, and observing that both the fields $u^s(\cdot;x_0)$ and $G(\cdot;x)$ verify the same Helmholtz equation in $B_r\setminus B_R\subset\Omega_0$ (where $n_b(x)=n(x)=\tilde{n}_0$), we find
\begin{align}\label{monti}
&\int_{\partial B_R}\left\{\frac{\partial u^s(y;x_0)}{\partial\nu(y)}G(y;x)-u^s(y;x_0)
\frac{\partial G(y;x)}{\partial\nu(y)}\right\} ds(y)=\\
& =\int_{\partial B_r}\left\{\frac{\partial u^s(y;x_0)}{\partial\nu(y)}G(y;x)-u^s(y;x_0)
\frac{\partial G(y;x)}{\partial\nu(y)}\right\} ds(y).\nonumber
\end{align}
We now recall \cite{cokr98} that any radiating solution $v$ of the Helmholtz equation (with generic wave number $k>0$) outside a disk in $\R^2$
has the following asymptotic behavior
\be\label{asintoto}
v(x)=\frac{e^{\mathrm{i} k r}}{\sqrt{r}}\left\{v_{\infty}(\hat{x})+O\left(\frac{1}{r}\right)\right\},\ \ \ \  r=|x|\rightarrow\infty,
\ee
where $v_\infty$ is the far-field pattern of $v$. If $v^1$ and $v^2$ are two such solutions, from (\ref{asintoto}) we find \cite{cokr98}
\be\label{zioperplesso}
v^1(x)\frac{\partial v^2(x)}{\partial r}=\mathrm{i} k \frac{e^{2\mathrm{i} k r}}{r}\,v^1_{\infty}(\hat{x})\, v^2_{\infty}(\hat{x}) 
+O\left(\frac{1}{r^2}\right),\ \ \ \ r\rightarrow\infty,
\ee
uniformly for all directions. By applying (\ref{zioperplesso}) to the radiating fields $u^s(\cdot;x_0)$ and $G(\cdot;x)$, we easily 
find that the integrand function at the right-hand side of (\ref{monti}) is $O\left(\frac{1}{r^2}\right)$ and then the integral itself 
vanishes as $r\rightarrow\infty$. This is even more true if the wave number $k>0$ is replaced by $k\sqrt{\tilde{n}_0}$ with 
$\mathrm{Im}\left\lbrace\sqrt{\tilde{n}_0}\right\rbrace >0$, since the attenuation of the fields and their derivatives is faster.

As regards the integral on $B_R$ in (\ref{quiz}), 
we come back to eq. (\ref{errani}) in Section \ref{ziodifferenziale} and recall that the singularity of $u(x;x_0)$ for $x=x_0$ is
cancelled out by $m(x)$, since $x_0\notin\mathrm{supp}\, m$: then, substituting (\ref{errani}) into (\ref{quiz}) and taking the limit 
as $R\rightarrow\infty$, we can write
\be\label{lippusrec}
u^s(x;x_0)=-\tilde{k}_0^2\int_{\R^2}G(y;x)m(y)u(y;x_0)\,dy\ \ \ \ \forall x\in\tilde{\Omega}_0,
\ee
which is immediately written in the form (\ref{lippus}) by using the reciprocity property \cite{como98,landau8} $G(y;x)=G(x;y)$.
\end{proof}

We call eq. (\ref{lippus}) the `inhomogeneous Lippmann-Schwinger equation', to emphasize that the reference background is (or may be) 
inhomogeneous. In the next section, we shall apply an inversion algorithm to this equation in order to compute the point values of $m$, 
i.e., of the refractive index inside the region under investigation.

\section{A hybrid scheme}\label{zioibrido}

In general, iterative methods for the quantitative solution of inverse scattering problems are applied to the homogeneous Lippmann-Schwinger equation. Such methods take as input the scattering data collected by antennas placed outside the investigated area and inside a homogeneous zone, and provide as output the reconstruction of the refractive index everywhere in the inhomogeneous region. The main drawback of this computational approach is that, particularly when the scattering experiment is performed with a single and fixed frequency, the number of unknowns is typically much larger than the number of measured data, and therefore the reconstruction accuracy is often rather low.
However, there are situations where just a certain part of the inhomogeneous region is unknown and of interest for practical applications,
while information is available about the refractive index of the rest of the domain. In this case, the quantitative inverse scattering method can
be applied to the inhomogeneous Lippmann-Schwinger equation, provided one is able to compute the Green's function of the inhomogeneous (and known)
background and to approximately identify the region taken up by the scatterer under investigation.
In the following of the current section, we describe an implementation of this approach essentially based on the contrast source inversion method.
Therefore, in order to realize the proposed hybrid approach, the ingredients we need are: 1) a method for computing the Green's function of the background; 2) a qualitative method to visualize (an overestimate of) the scatterer support inside the background itself; 3) a quantitative scheme
for the inversion of the inhomogeneous Lippmann-Schwinger equation in the region identified at step 2).

\subsection{The computation of the Green's function}\label{green}

A handy way to compute the Green's function of an inhomogeneous background is given by the method of moments (MOM) \cite{harrington,ri65}. 
Since $G(\cdot;x_0)$ solves a particular case of problem (\ref{scatteringnear}) with the identifications $u(\cdot;x_0)=G(\cdot;x_0)$, 
$u^i(\cdot;x_0)=\Phi(\cdot;x_0)$ and $n(x)=n_b(x)$, it also satisfies the homogeneous Lippmann-Schwinger equation \cite{cokr98,como98}
\be\label{figalli}
G(x;x_0)=\Phi(x;x_0)-\tilde{k}_0^2\int_{\R^2}\Phi(x;y) m_b(y)G(y;x_0)\,dy\ \ \ \ \mathrm{for}\ x\in\R^2,
\ee
with $m_b(x):=[\tilde{n}_0-n_b(x)]/\tilde{n}_0$ and $x_0\in\tilde{\Omega}_0$.
Since $\mathrm{supp}\,m_b$ is compact, the integration domain in (\ref{figalli}) is bounded: then, we can consider a finite (and not necessarily uniform) partition of $\mathrm{supp}\,m_b$ by $L$ cells $A_1,\ldots,A_L$, chosen so small that $m_b$ and $G(\cdot;x_0)$ can be assumed
to be constant inside each cell. Then, eq. (\ref{figalli}) can be approximated as
\be\label{alessio}
G(x;x_0)\approx\Phi(x;x_0)-\tilde{k}_0^2\sum_{j=1}^{L} m_b(y_j) G(y_j;x_0) \int_{A_j}\Phi(x;y)dy,
\ee
where $y_j\in\R^2$ identifies the center of the cell $A_j$.
Furthermore, following \cite{ri65}, we approximate each square cell as a circular cell of the same area, so that the integral in eq.
(\ref{alessio}) can be evaluated as
\begin{align}\label{integral-approx}
\int_{A_j}\Phi(x;y)dy & =\frac{\I}{4}\int_{A_j}H^{(1)}_0(\tilde{k}_0|x-y|)dy\approx \\   \nonumber
& \approx
\left\{
\begin{array}{ll}
\displaystyle\frac{\I\pi \tilde{k}_0 a_j}{2}J_{1}(\tilde{k}_0 a_j)H_{0}^{(1)}(\tilde{k}_0|x-y_j|)  & \mbox{for } x \notin A_j\\ [2mm]
\displaystyle\frac{\I}{2}\left[\pi \tilde{k}_0 a_j H_{1}^{(1)}(\tilde{k}_0 a_j)+2\I\right]  & \mbox{for } x \in A_j,
\end{array}
\right.
\end{align}
where $a_j:=\sqrt{\Delta_j/\pi}$, $\Delta_j$ is the area of the cell $A_j$, $J_1$ is the Bessel function of first order and $H_{1}^{(1)}$ is the Hankel function of first kind of order one. We point out that the method can be notably speeded up by utilizing the fast Fourier transform algorithm as it is described for example in \cite{bozza09,zhetal03}: we shall adopt this version of the MOM method for our numerical simulations.

\subsection{The linear sampling method}\label{basta}

The linear sampling method (LSM) is the earliest and most used qualitative method in inverse scattering: in the case of an object embedded in a homogeneous background, it provides a reconstruction of its support by only knowing the field measured around it.
When the background is inhomogeneous and its Green's function is known, the LSM
can be extended to the case of an inhomogeneity immersed in a medium with piecewise constant refractive index.
The basic idea is to write and to approximately solve the modified far-field equation \cite{cafaha06,como98}
\begin{equation}\label{mod-far-field}
\int_{\Gamma} [u^s(x;x_0)-u^s_b(x;x_0)] g_z(x_0)\, dx_0 = G(x;z)\ \ \ \ \mathrm{for}\ x \in \Gamma,
\end{equation}
where $\Gamma:=\{x\in\R^2 : |x|=R_{\Gamma}\}\subset\Omega_0$ is the circle of radius $R_{\Gamma}$ where emitting and receiving antennas are placed,
and $z\in\R^2$ is a sampling point inside the investigation domain enclosed by $\Gamma$. In (\ref{mod-far-field}), the Green's function $G(x;z)$
and the field $u^s_b(x;x_0)$ can be computed by exploiting the knowledge of the background and applying the MOM method, while $u^s(x;x_0)$ represents
the measurements, i.e., the data of the inverse scattering problem.

In fact, it can be proved \cite{cafaha06,como98} that there exists an approximate solution of (\ref{mod-far-field}) whose $L^2(\Gamma)$-norm is bounded inside the scatterer, grows up as $z$ approaches its boundary from inside and remains very large outside, thus behaving as an indicator function for the support $\bar{D}$ of the scatterer itself.
This result inspires a simple algorithm that approximately solves the modified far-field equation by means of a regularization procedure. More precisely, the LSM requires the choice of a numerical grid covering the region where the scatterer is placed;
then, for each point $z$ of the grid, a discretized version of the modified far-field equation is solved by using the Tikhonov regularization
method \cite{tietal95}, and the Euclidean norm of the regularized solution is plotted for each $z$. As a result, the boundary of the scatterer is highlighted by the points of the grid corresponding to the largest increase of the Euclidean norm. We recall that the computational time of this
algorithm can be notably reduced by applying a no-sampling formulation \cite{arbrpi06}, called `no-sampling linear sampling method' (NSLSM), which is the one adopted in the present paper.

Finally, we remark that, for the purposes of our implementation, a univocal identification of the shape of the scatterer is performed by post-processing the NSLSM through an active contour technique \cite{arbrcopi08,chve01}: for reasons that will be clear in the next subsection, we choose the parameters of this edge detection algorithm in such a way that a slight overestimate of the scatterer is favoured.

\subsection{The contrast source inversion method}\label{CSI}

In our hybrid scheme, the contrast source inversion (CSI) method \cite{giletal09,vdbkl97,vdbbrab99} is the technique we use for the quantitative
inversion of the inhomogeneous Lippmann-Schwinger equation. More precisely, the CSI method computes the point values of the refractive index $n(x)$ inside an investigation domain $T$ containing the scatterer support $\bar{D}=\mathrm{supp}\,m$.
The idea at the basis of the CSI method is to split an inverse scattering problem described by an integral equation such as (\ref{lippus}) into two different problems, one defined inside $T$ and the other one on the curve $\Gamma$ where antennas are placed.
More formally, remembering (\ref{scatteringnear})(b), equation (\ref{lippus}) is equivalently recast in the form
\be\label{ferrer}
u(x;x_0)=u^i(x;x_0)-\tilde{k}_0^2 \int_{T}G(x;y)m(y)u(y;x_0)\,dy,
\ee
and then replaced by the system
\begin{equation}\label{lipp-sch-system-comp}
\left\{
\begin{array}{lll}
\displaystyle w(x;x_0)={m}(x)u^i(x;x_0)-m(x)\left[{\mathcal{G}}^{T}w(\cdot;x_0)\right](x) & \mbox{for $x\in T$} & \mathrm{(a)} \\
\displaystyle u^{s}(x;x_0)=-\left[{\mathcal{G}}^{\Gamma}w(\cdot;x_0)\right](x) & \mbox{for $x\in \Gamma$}, & \mathrm{(b)}
\end{array}
\right.
\end{equation}
where $w(x;x_0):=m(x)u(x;x_0)$, the operator ${\mathcal{G}}^{T}:L^2(T)\rightarrow L^2(T)$ is defined as
\begin{equation}\label{operators1}
\left[{\mathcal{G}}^{T} f\right](x):=\tilde{k}_0^2\int_{T} G(x;z)f(z)\,dz \qquad \forall x\in T,
\end{equation}
and ${\mathcal{G}}^{\Gamma}:L^2(T)\rightarrow L^2(\Gamma)$ is defined as
\begin{equation}\label{operators2}
\left[{\mathcal{G}}^{\Gamma} f\right](x):=\tilde{k}_0^2\int_{T} G(x;z)f(z)\,dz \qquad \forall x \in \Gamma.
\end{equation}
Note that eq. (\ref{lipp-sch-system-comp})(a) is obtained from eq. (\ref{ferrer}) by multiplying both members for $m(x)$, while eq. (\ref{lipp-sch-system-comp})(b) is the restriction of eq. (\ref{lippus}) for $x\in \Gamma$: in particular, $u^s(x;x_0)$ for $x\in\Gamma$ represents
the data of the inverse scattering problem.

The CSI method consists of minimizing the functional
\begin{equation}
F(w,m)= \frac{\|u^{s}-{\mathcal{G}}^{\Gamma}w\|^2_{\Gamma}}{\|u^{s}\|^2_{\Gamma}} +
\frac{\|{m u^i - w - m\,}{\mathcal{G}}^{T} w \|^2_{T}}{\|mu^i\|^2_{T}},
\end{equation}
where we dropped the dependence on $x$, $x_0$, and $\|\cdot\|_{\Gamma}$, $\|\cdot\|_{T}$ denote the $L^2$-norms on the spaces $L^2(\Gamma)$, $L^2(T)$ respectively. The minimization of $F$ is performed by applying gradient algorithms \cite{polak} and the updates are alternately computed for $m$ and $w$ \cite{vdbkl97,vdbbrab99}. The information on the background is again coded into the Green's function, i.e., into the operators $\mathcal{G}^{T}$
and $\mathcal{G}^{\Gamma}$.

In the standard case of a homogeneous medium, the implementation of the approach described above is well-established and does not require further discussion. Instead, in the inhomogeneous case two issues need to be addressed. First, we observe that eq. (\ref{lipp-sch-system-comp})(a) is written for $x\in T$; on the other hand, the integral equation (\ref{lippus}), whence eq. (\ref{lipp-sch-system-comp})(a) should derive, has been proved in Theorem \ref{teorema} only for $x\in \tilde{\Omega}_0$ and, in general, $T$ is not contained in $\tilde{\Omega}_0$.
To overcome this drawback, we observe that the purpose of the CSI algorithm is to compute the values of $n(x)$ for $x\in T$, and these values
do not depend on the reference background inside $T$. Then,
we can think that the original background in $T$ is replaced by an artificial one, which hosts the same homogeneous medium occupying $\tilde{\Omega}_0$.
In other words, this amounts to replacing the refractive index $n_b$ with $\tilde{n}_b$, such that $\tilde{n}_b(x)=\tilde{n}_0$ for all $x\in T$ and
$\tilde{n}_b(x)=n_b(x)$ for all $x\notin T$. Of course, the background Green's function $G(x;z)$ in eqs. (\ref{operators1}) and (\ref{operators2})
must be replaced accordingly, i.e., by $\tilde{G}(x;z)$.
We point out that a good choice of the investigation domain $T$ is the region highlighted by the NSLSM, as explained in Subsection \ref{basta}: indeed, such region is an overestimate of $D$, i.e., it contains $D$, but is also close to be as small as possible,
thus minimizing the number of points where $n(x)$ is to be computed, i.e., the number of unknowns of the problem. As a result, also the number of measurements necessary for a successful implementation of the method is optimized.

This same trick, i.e., changing the background as just explained, is also useful to address the second issue, which is concerned with the
computation of the Green's function $G(x;z)$. Indeed, eq. (\ref{operators1}) shows that both $x$ and $z$ vary in $T$: then, in particular, also the singular case $x=z$ is of interest. Now, $G(x;z)$ satisfies eq. (\ref{figalli}) (with the identification $z=x_0$), which implies that if $z\in\mathrm{supp}\,m_b$ and $x=z$, the integrand function at the right-hand side of (\ref{figalli}) is affected by a double singularity for $y=z$:  one due to $\Phi(z;y)$, the other one due to $G(y;z)$. This prevents us from applying the usual approximation scheme outlined in eq.s (\ref{alessio}) and (\ref{integral-approx}): accordingly, an \textit{ad hoc} quadrature rule should be created to numerically compute the integral. Such problem is avoided by the previous choice of the new background, i.e., of $\tilde{n}_b$: indeed, the region $T$ is now erased by the actual integration domain in (\ref{figalli}), since $\tilde{m}_b(x):=[\tilde{n}_0-\tilde{n}_b(x)]/\tilde{n}_0$ vanishes inside $T$, while $z\in T$.

We are now ready to present in the next section some numerical results obtained by implementing the theoretical framework developed so far.

\section{Numerical examples} \label{zionumerico}
The aim of this section is twofold. First, we numerically validate the CSI algorithm in the inhomogeneous case: more precisely, we compare the performance of the CSI based on the inhomogeneous Lippmann-Schwinger equation with that of the traditional CSI (the two algorithms will be called `inhomogeneous CSI' and `homogeneous CSI', respectively). To this end, we perform a preliminary set of simulations by using simple numerical phantoms and without adding noise to measurements. Second, we test the effectiveness of the whole hybrid scheme presented above, as well as its worthwhileness in real applications, by implementing it with a realistic phantom of a female breast slice: the goal is to highlight the presence of a tumoral mass inside the inhomogeneous background formed by the healthy tissues.


\subsection{Homogeneous and inhomogeneous CSI: a comparison}

\begin{figure}
\begin{center}
\begin{tabular}{cc}
\epsfig{file=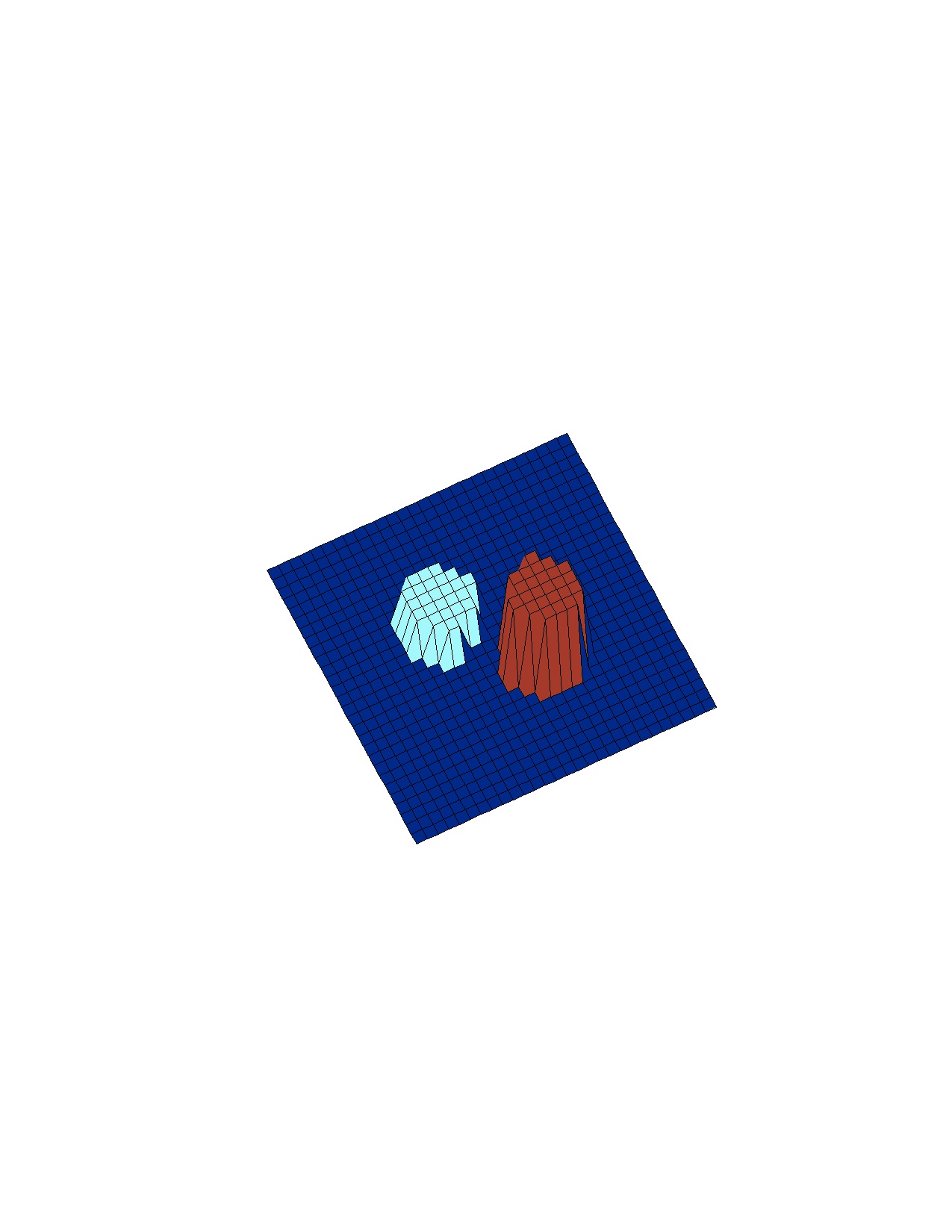, width=6.0cm}&
\epsfig{file=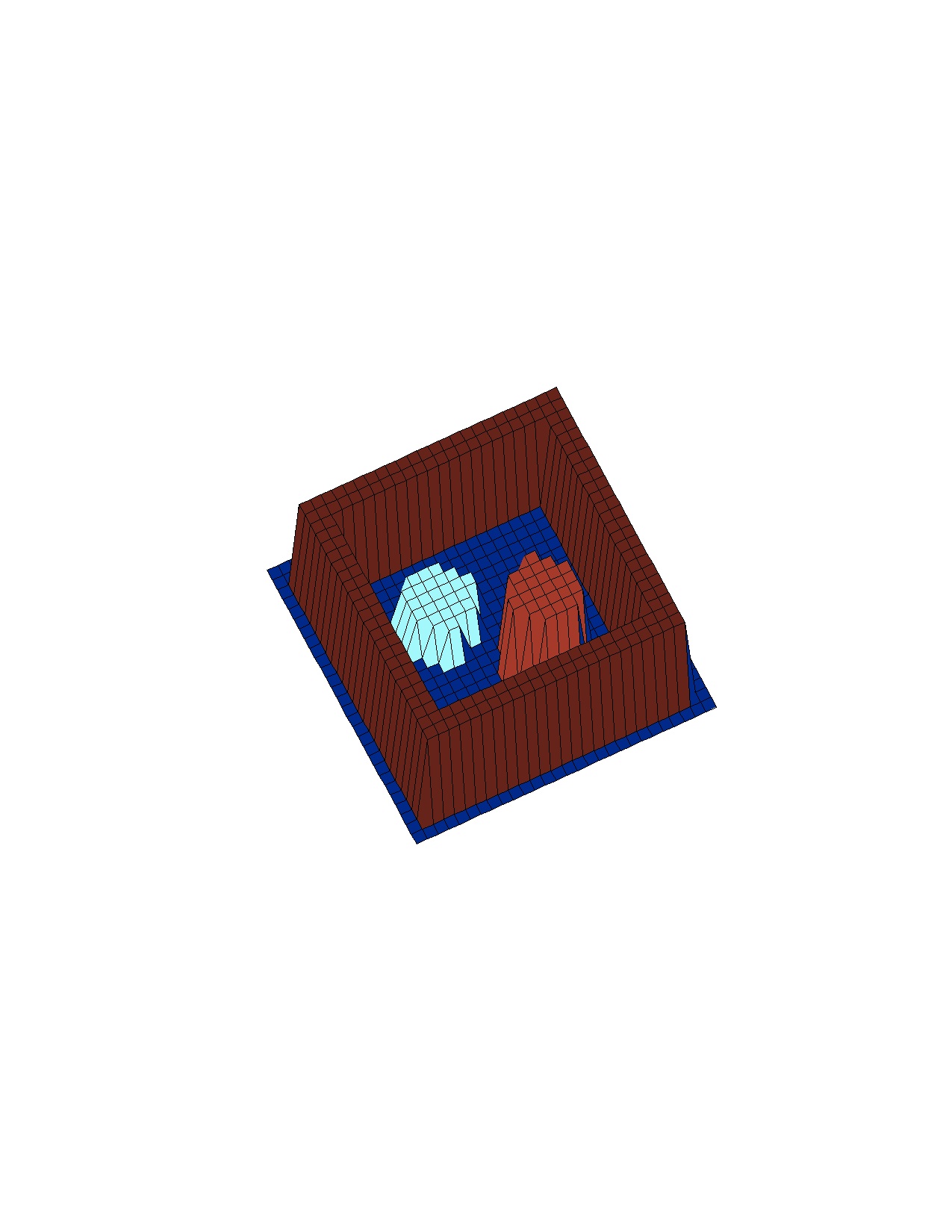, width=6.0cm}\\
(a) & (b)
\end{tabular}
\caption{\label{fantocci}Visualization of the relative dielectric permittivity $\varepsilon_r$ of the numerical phantoms utilized for the first set of scattering experiments.
The phantoms are piecewise homogeneous and purely dielectric, i.e., non-absorbing.}
\end{center}
\end{figure}

The first set of numerical simulations is concerned with the reconstruction of the two (non-absorbing) phantoms visualized in panels (a) and (b) of Figure \ref{fantocci}. In particular, we are going to show an example where the homogeneous CSI fails to provide a satisfactory result, while the inhomogeneous CSI succeeds: this is obtained by coding some additional information into the background, which then becomes inhomogeneous.

Indeed, consider first the phantom in Figure \ref{fantocci}(a): the pixel values of the relative dielectric permittivity $\varepsilon_r$ and the geometry of the objects are plotted in Figure \ref{fantocci_reconstruction}(a). The scattering experiment is simulated by choosing a wavelength
$\lambda$ equal to the length unit adopted for all panels in Figure \ref{fantocci_reconstruction},
and by using $30$ unit point sources uniformly placed on a circle centred at the origin and with radius $3\,\lambda$. The scattered field is computed
by means of a MOM code, as outlined in Subsection \ref{green}, at $30$ points obtained from the previous ones after a rotation of $\pi/30$.
Then, we apply the homogeneous CSI (implemented in the error-reducing version of \cite{vdbbrab99} and initialized by backpropagation): as a result, we obtain the satisfactory reconstruction shown in Figure \ref{fantocci_reconstruction}(b).

However, if we consider the more complex scenario of Figure \ref{fantocci}(b), with pixel values of $\varepsilon_r$ plotted in Figure
\ref{fantocci_reconstruction}(c), the same homogeneous CSI provides the unsatisfactory reconstruction shown in Figure \ref{fantocci_reconstruction}(d): it is evident that the algorithm converges to the wrong local minimum. A feasible way to overcome this drawback is to consider the square barrier surrounding the two disks as a part of the background: this situation is represented in Figure \ref{fantocci_reconstruction}(e). Then, we implement the inhomogeneous CSI: in particular, the reconstruction is now performed only inside the barrier, i.e., inside the green square highlighted in Figure \ref{fantocci_reconstruction}(f). As a result, the effectiveness of the algorithm in reconstructing the two disks is restored, as shown by Figure \ref{fantocci_reconstruction}(f) itself.

\begin{figure}
\begin{center}
\begin{tabular}{cc}
\psfig{figure=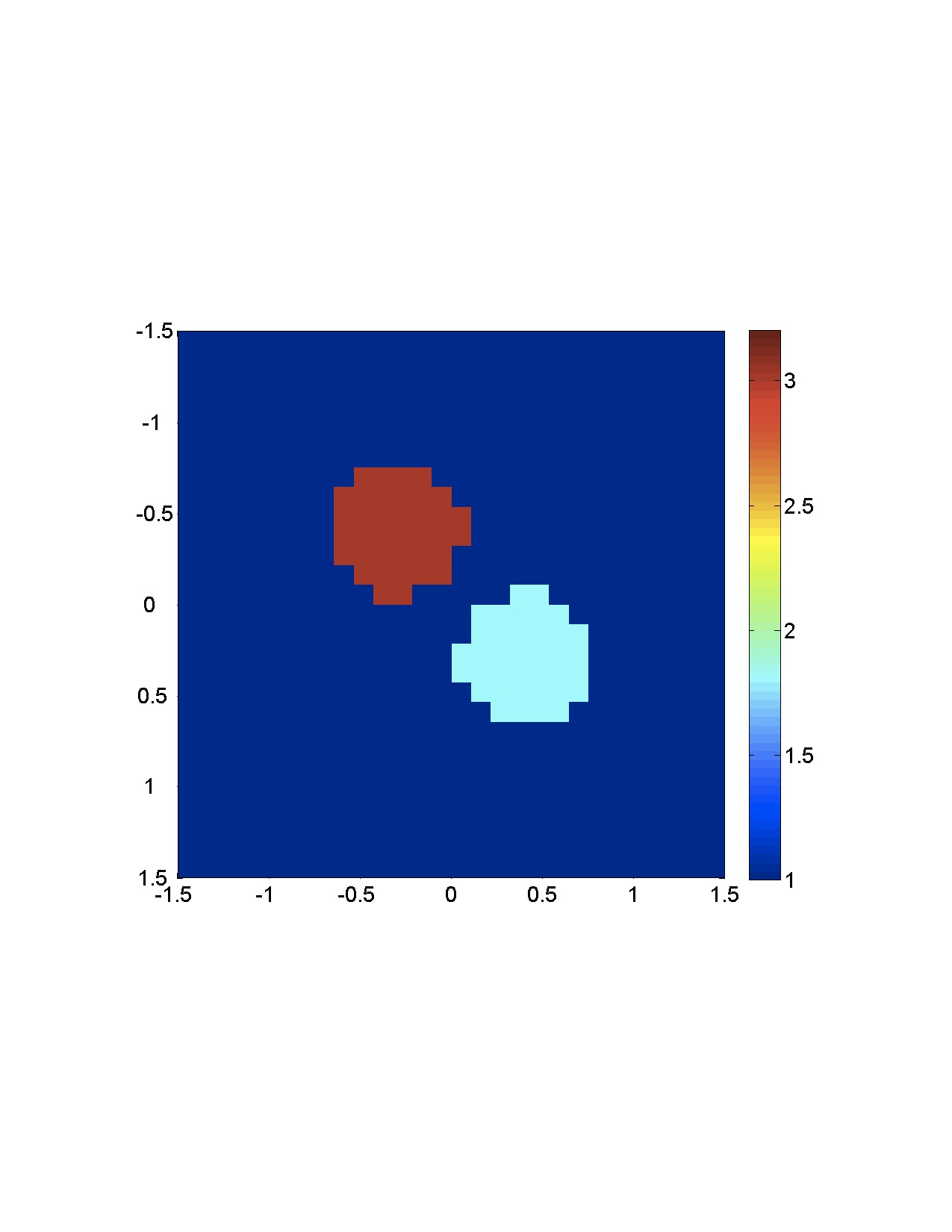, width=6.0cm} &
\psfig{figure=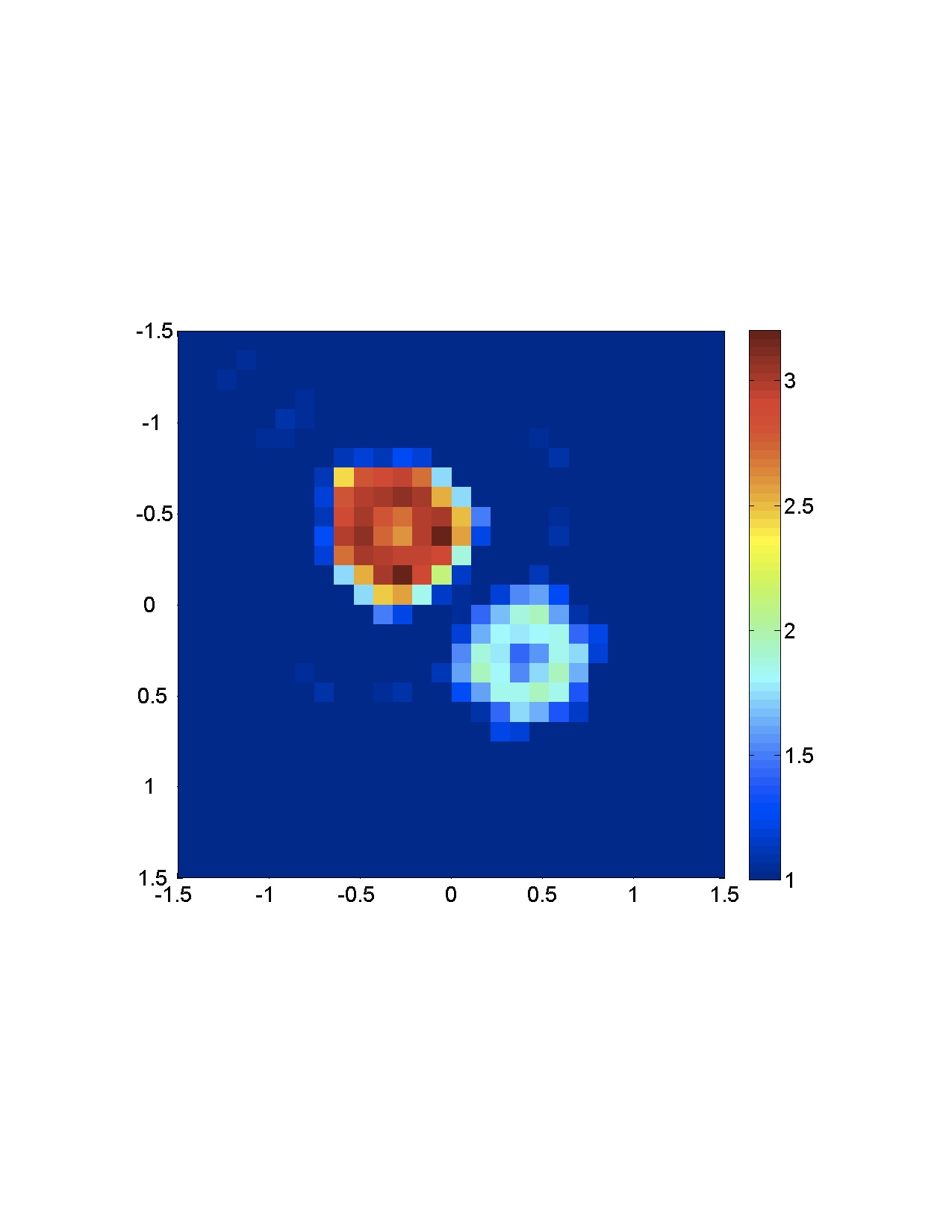, width=6.0cm}\\
$\ \ \ \ \ $ (a) & $\ \ \ \ \ $ (b) \\[0.35cm]
\psfig{figure=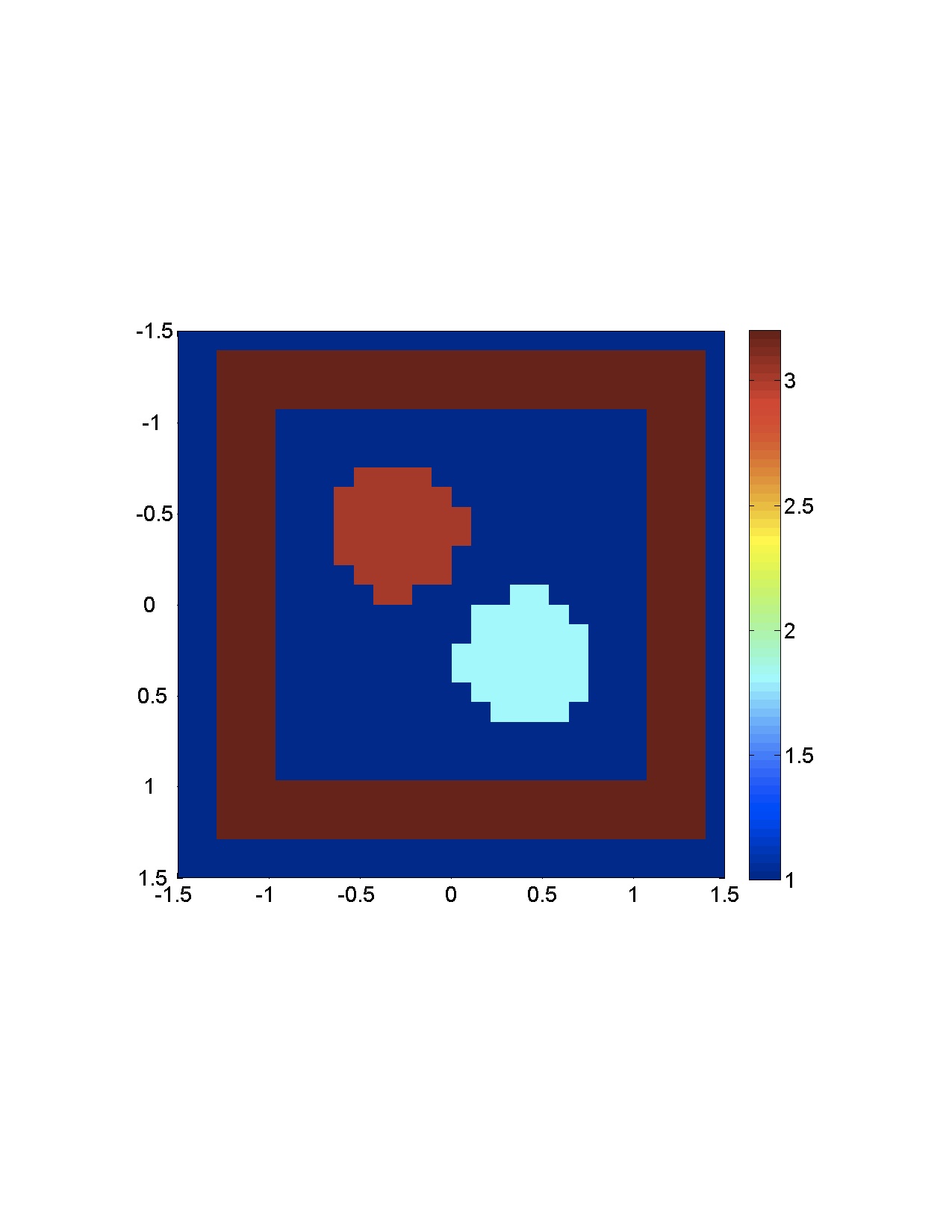, width=6.0cm} &
\psfig{figure=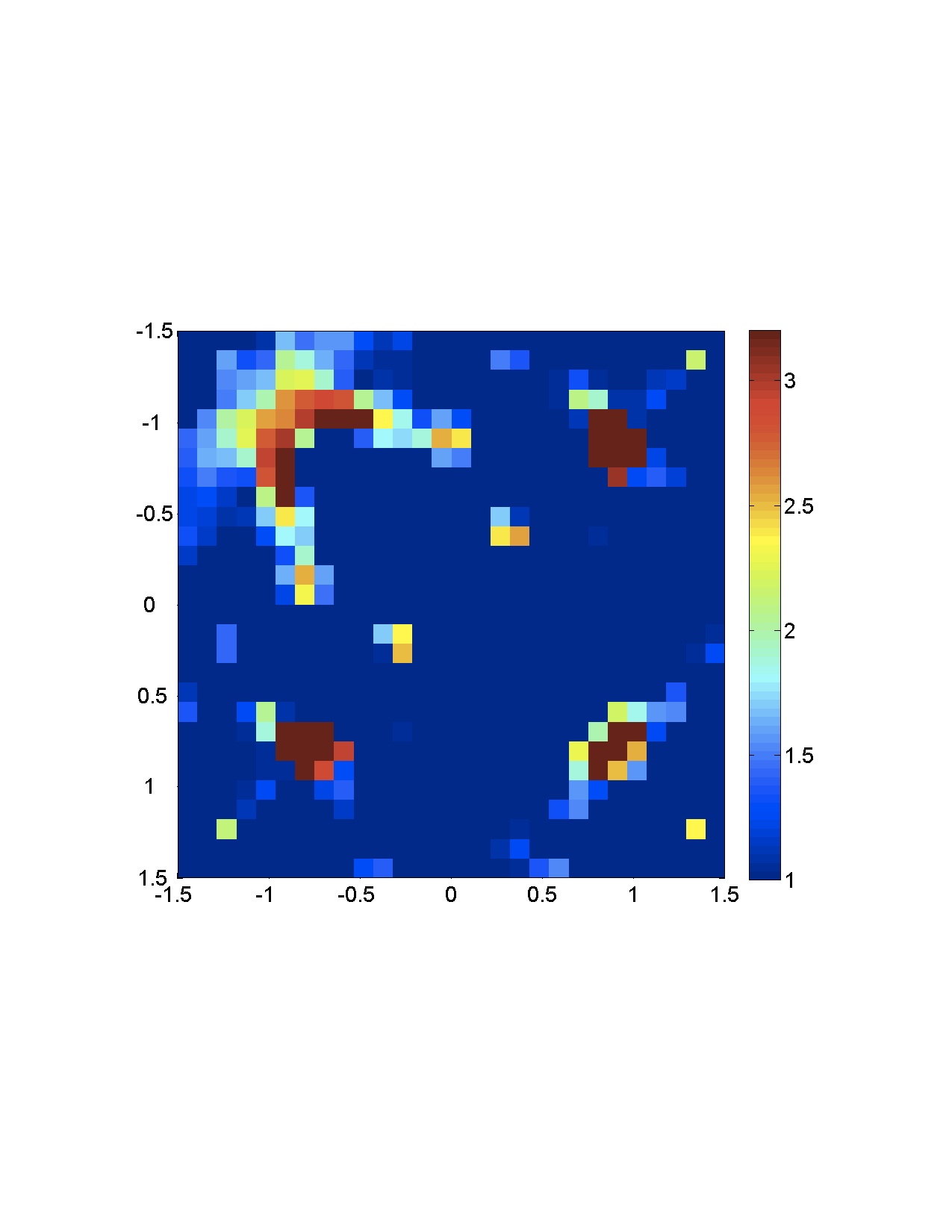, width=6.0cm} \\
$\ \ \ \ \ $ (c) & $\ \ \ \ \ $ (d) \\[0.35cm]
\psfig{figure=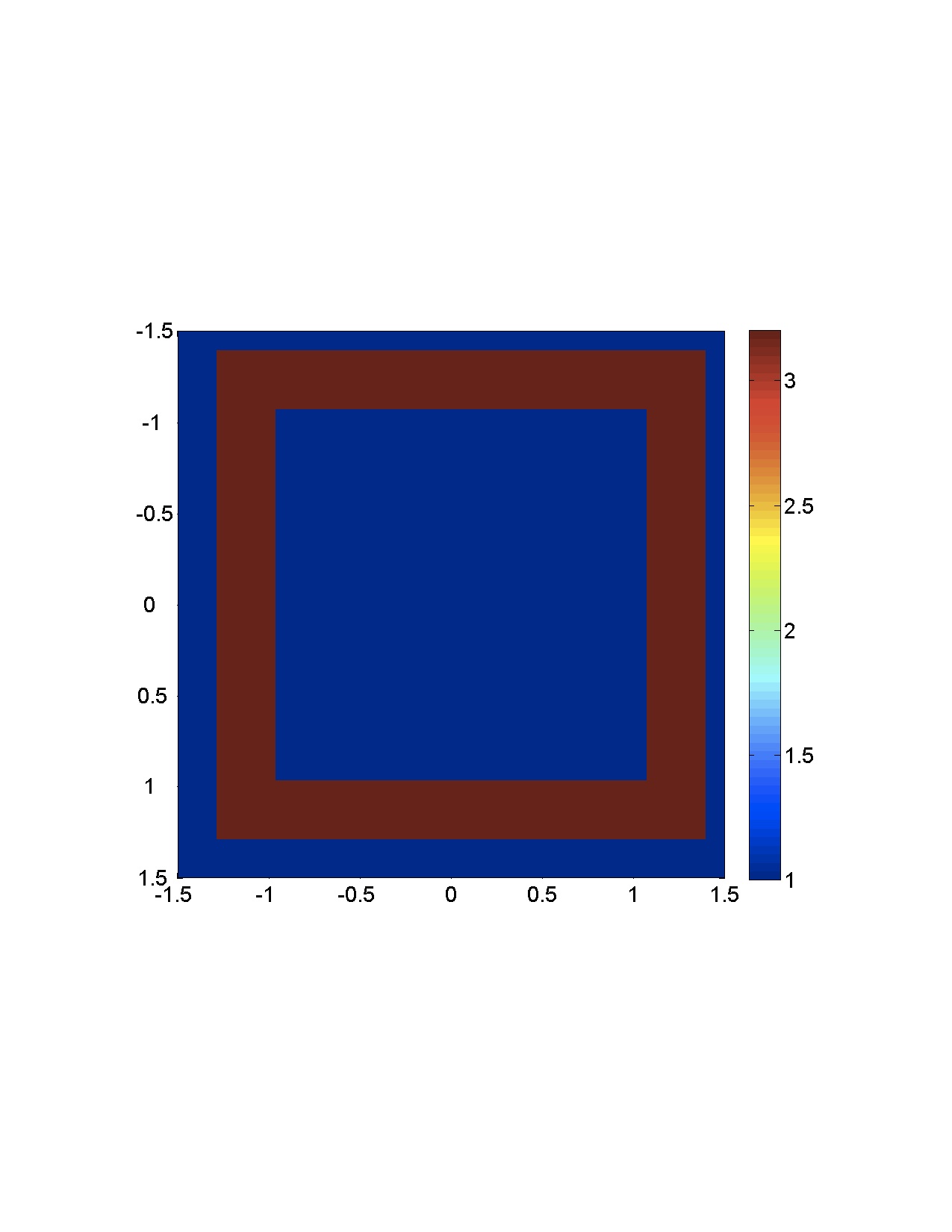, width=6.0cm}&
\psfig{figure=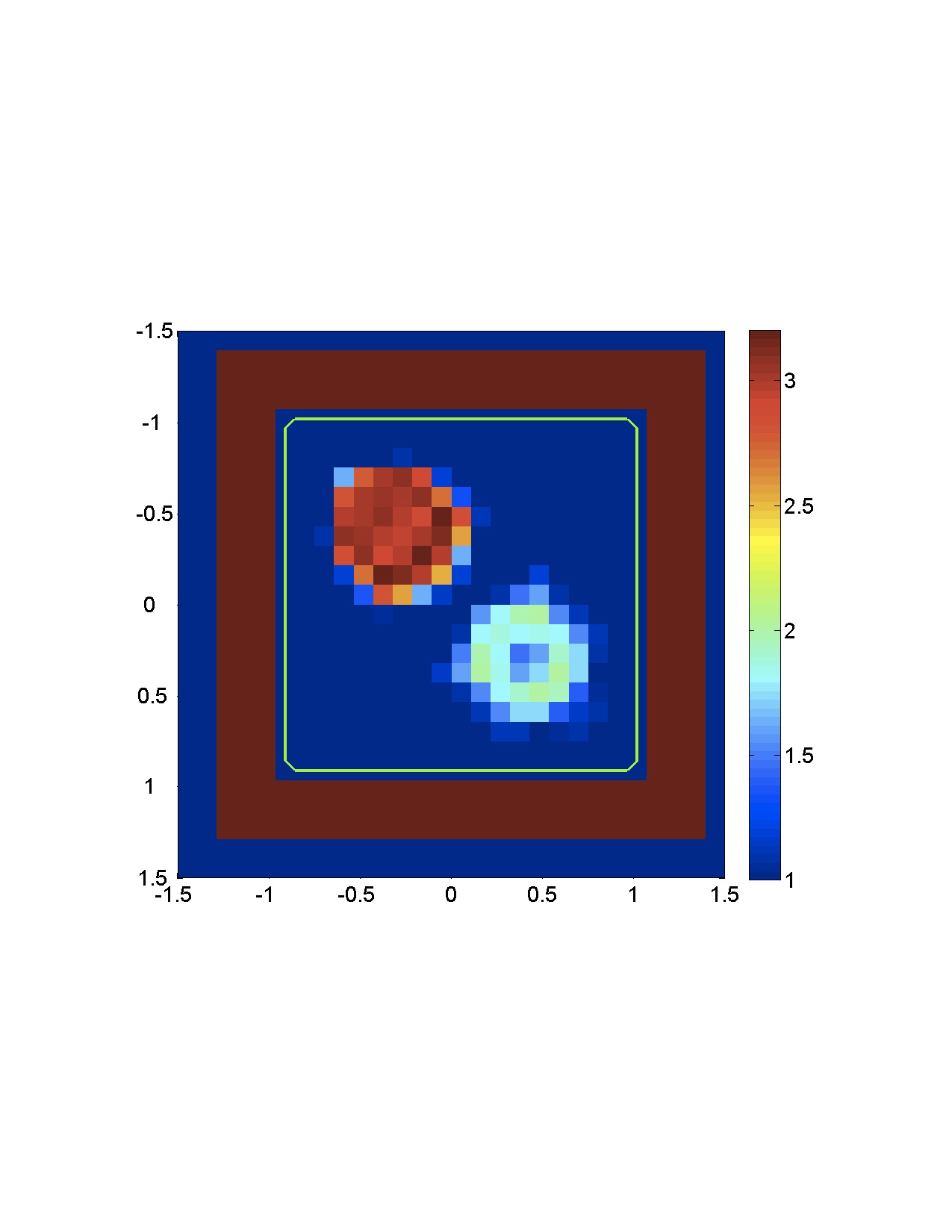, width=6.0cm} \\
$\ \ \ \ \ $ (e) & $\ \ \ \ \ $ (f) \\[0.35cm]
\end{tabular}
\caption{\label{fantocci_reconstruction}Panels (a) and (c): pixel values of the relative dielectric permittivity $\varepsilon_r$ of the phantoms shown in Figure (a) and Figure (b), respectively. Panels (b) and (d): corresponding reconstructions provided by the homogeneous CSI. Panel (f): reconstruction of the phantom in (c) provided by the inhomogeneous CSI, when the background is assumed to be as shown in panel (e).}
\end{center}
\end{figure}

\subsection{The hybrid approach validated against a realistic numerical breast phantom}

In the following numerical examples we test the effectiveness of the hybrid scheme described in Section \ref{zioibrido} when applied to scattering situations that might occur in practice.
To this end, we consider a realistic numerical phantom of a female breast slice, whose pixel values of relative permittivity $\varepsilon_r$ and conductivity $\sigma$ are obtained from the segmentation of MRI (Magnetic Resonance Imaging) images \cite{laetal07} (such phantoms are downloadable from the web site \texttt{uwcem.ece.wisc.edu/home.htm}). Then, a circular tumor with a diameter of $0.5\,$cm
is artificially inserted into the phantom: we refer e.g. to \cite{laoketal07} for the values of the electric parameters of tumoral tissues at various frequencies.
Our numerical experiments are performed at a frequency of $f=3\,$GHz: accordingly, in panels (a) and (b) of Figure \ref{breast_exact} we plot the pixel values of the relative permittivity $\varepsilon_r$ and conductivity $\sigma$ of the breast tissues at such frequency. As shown in this figure, we also assume that the breast slice is immersed in an infinite and homogeneous coupling medium \cite{meetal03}:
this trick reduces reflection phenomena, thus favouring the penetration of the wave into the breast itself. Finally, we choose $30$ unit point sources uniformly placed on a circle surrounding the breast, concentric with it and having a radius of $6\,$cm. By using a MOM code,
the scattered field is computed at $30$ points obtained from the previous ones after an angular shift of $\pi/30$,
then these field values are perturbed by $3\%$ Gaussian noise.



A whole and precise reconstruction of such a heterogeneous scatterer as a breast slice is extremely difficult to be achieved: the main issue is
the strong variability of the refractive index from point to point, as well as its large range of values. Then, in order to properly describe the heterogeneity of the tissues, the investigation domain should be discretized by using a large number of pixels, which corresponds to a large number of unknowns in the CSI algorithm. In practical applications, this number can be too large with respect to the amount of data, thus impairing the
reliability of the reconstruction. Therefore, an inhomogeneous-background approach to the problem can be helpful: indeed, when \textit{a priori} known, a part of the scatterer can be included in the background, so that the reconstruction can be performed on a fewer number of pixels (i.e., in a smaller investigation domain), which reduces the number of unknowns and increases the quality of the reconstruction itself.


In the first numerical example, whose results are collected and visualized in Figure \ref{breast_and_tumor}, we assume that the known background is formed by the fat internal tissue, the skin layer and the coupling medium.
In general, we may think that such preliminary information on the tissues is obtained from the segmentation of the most recent MRI image of the
patient's breast, while the properties of the coupling medium are known \textit{a priori}.
The pixel values of the relative permittivity $\varepsilon_r$ and conductivity $\sigma$ of these media are shown in panels (a) and (b) of Figure \ref{breast_and_tumor}.
Since our goal here is to reconstruct both the glandular and tumoral tissues inside the breast fat, such background panels have been respectively obtained from panels (a) and (b) of Figure \ref{breast_exact} by removing the artificial circular tumor (i.e., by restoring the original healthy tissue) and by artificially replacing the central glandular mass with an `average fat' (i.e., by replacing
the pixel values of $\varepsilon_r$ and $\sigma$ in the glandular tissue with constant values computed by averaging over the pixel values of
$\varepsilon_r$ and $\sigma$ in the fat tissue).
Then, this
background is utilized to implement the NSLSM, the qualitative technique outlined in Subsection \ref{basta}. In Figure \ref{breast_and_tumor}(c) we show the visualization of the unknown region as provided by the NSLSM: more precisely, we plot the values of the indicator function, chosen as the reciprocal of the squared norm of the regularized solution to the modified far-field equation (\ref{mod-far-field}). Next, we apply an active contour technique \cite{arbrcopi08,chve01} to this visualization map: as a result, we extract the support of the unknown region $T$, which is the
homogeneous inner domain shown in panel (d) of Figure \ref{breast_and_tumor}. More precisely, in this panel we plot the pixel values of
$\varepsilon_r$ characterizing the artificial background we need to consider for applying the inhomogeneous CSI in the investigation domain $T$ (the plot of $\sigma$ would be analogous and is not shown here). Indeed, we remind that, in order to avoid the theoretical and computational problems described in Subsection \ref{CSI}, the domain $T$ should be considered as filled with the outmost medium (i.e., in our case, with the coupling medium).
The reconstructions provided by the inhomogeneous CSI inside $T$ for the relative permittivity and for the conductivity are presented in panels (e) and (f) of Figure \ref{breast_and_tumor}, respectively: the results are in rather good agreement with the true phantoms of Figure \ref{breast_exact}.


\begin{figure}
\begin{center}
\begin{tabular}{cc}
\epsfig{file=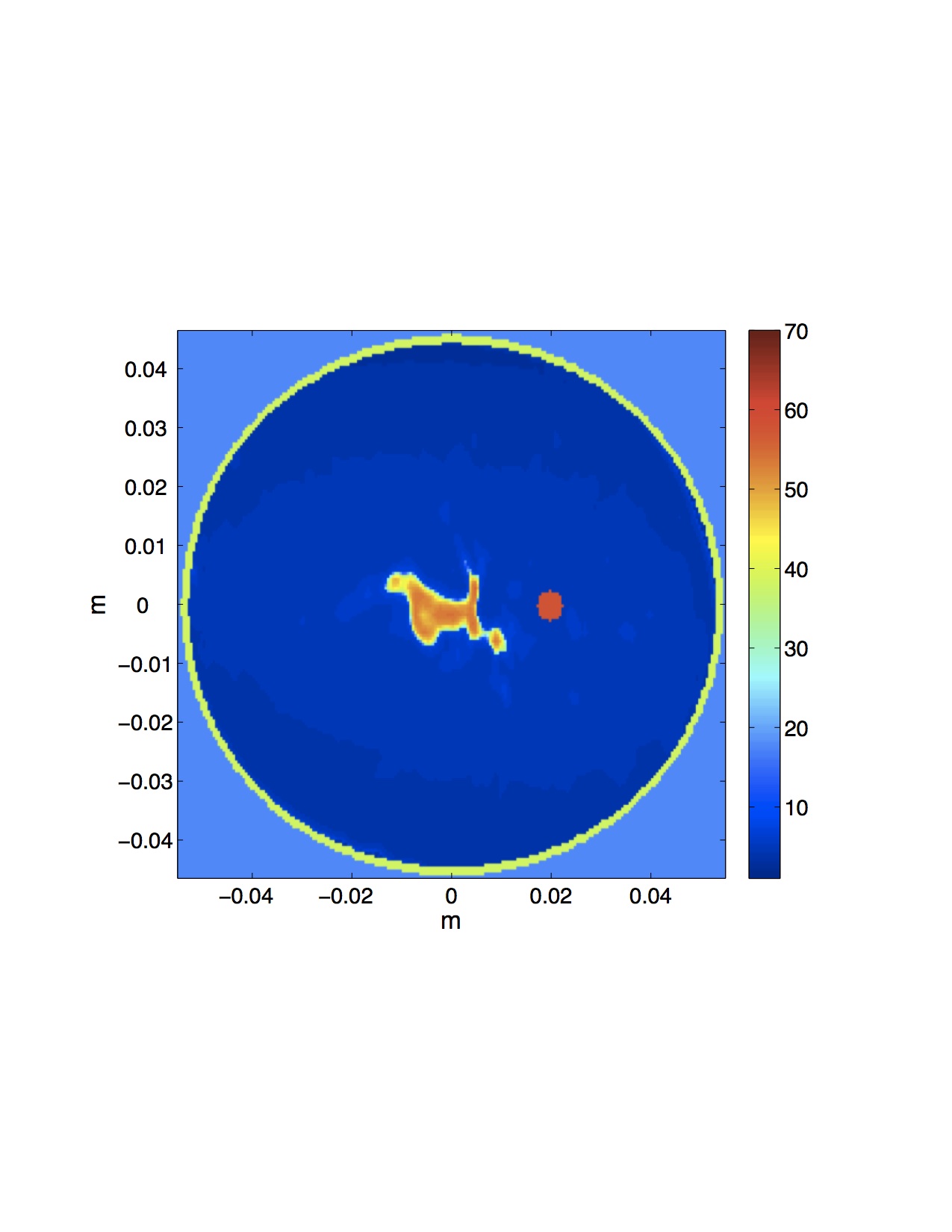, width=6.0cm}&
\epsfig{file=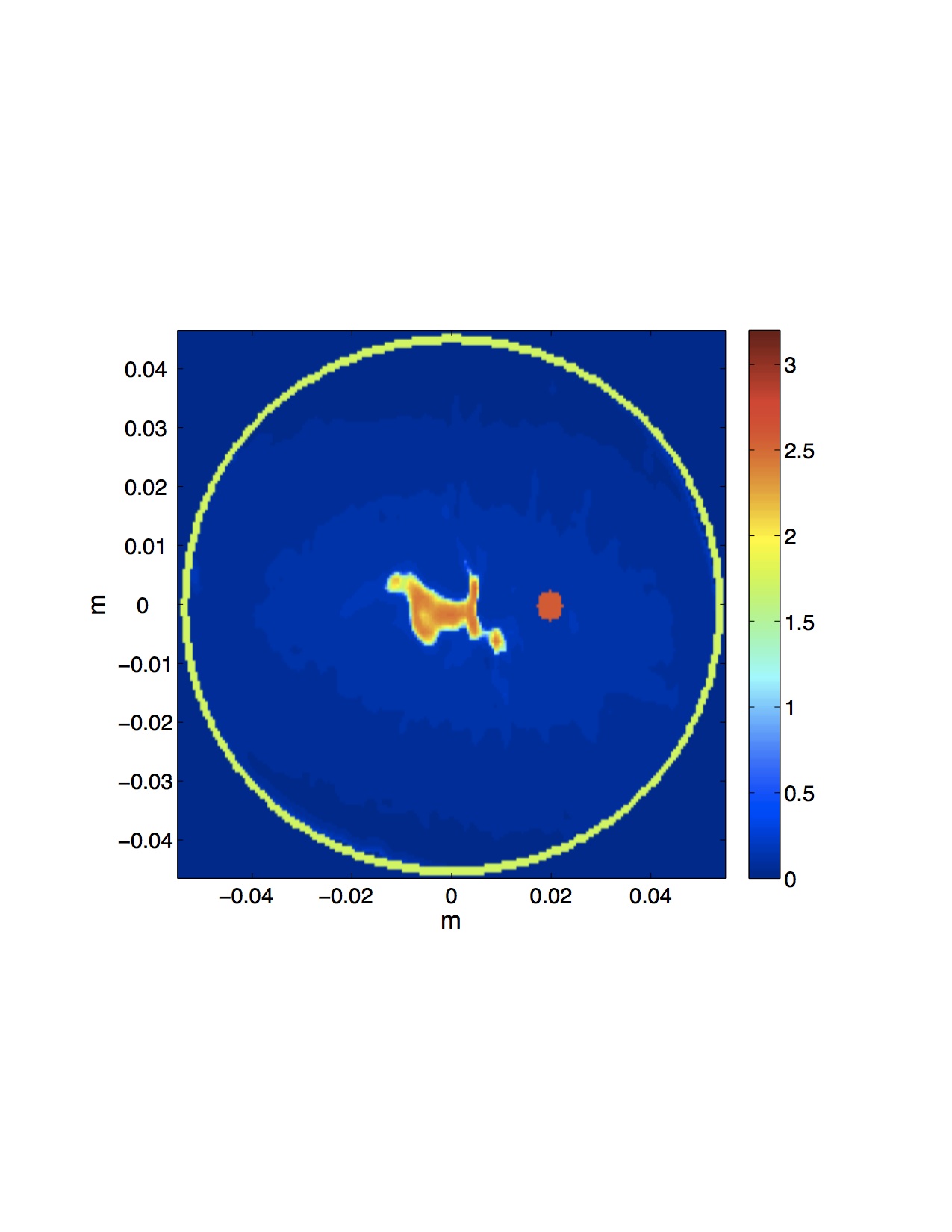, width=6.0cm}\\
(a) & (b)
\end{tabular}
\caption{\label{breast_exact}Numerical phantom of the breast slice utilized in our scattering simulations, performed at a frequency of $f=3\,$GHz. Panel (a): pixel values of the relative dielectric permittivity $\varepsilon_r$. Panel (b): pixel values of the conductivity $\sigma$. In both panels, the tumoral mass is the small disk centered at $(0.02,0)\,$m and with a diameter of $0.5\,$cm.}
\end{center}
\end{figure}

\begin{figure}
\begin{center}
\begin{tabular}{cc}
\psfig{figure=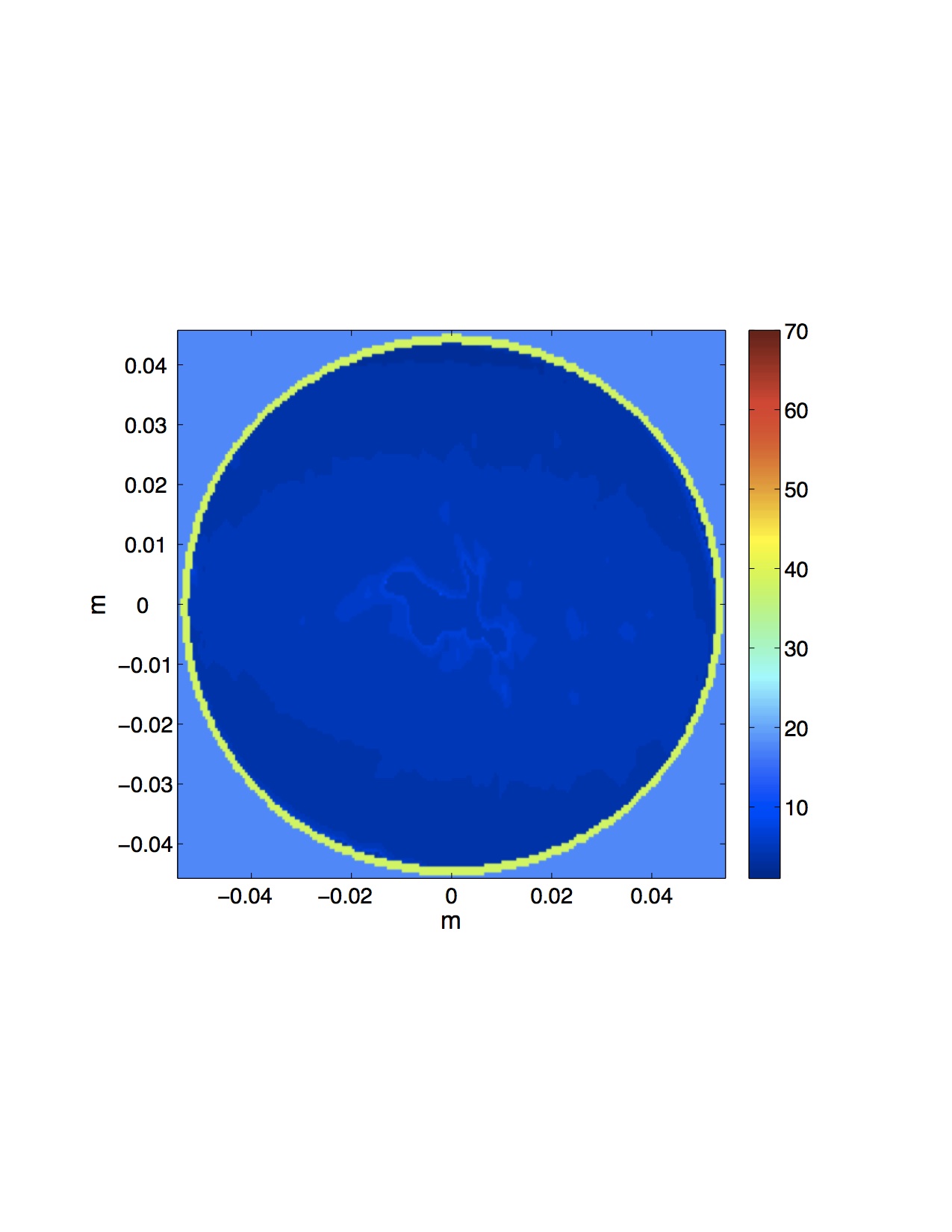, width=6.0cm} &
\psfig{figure=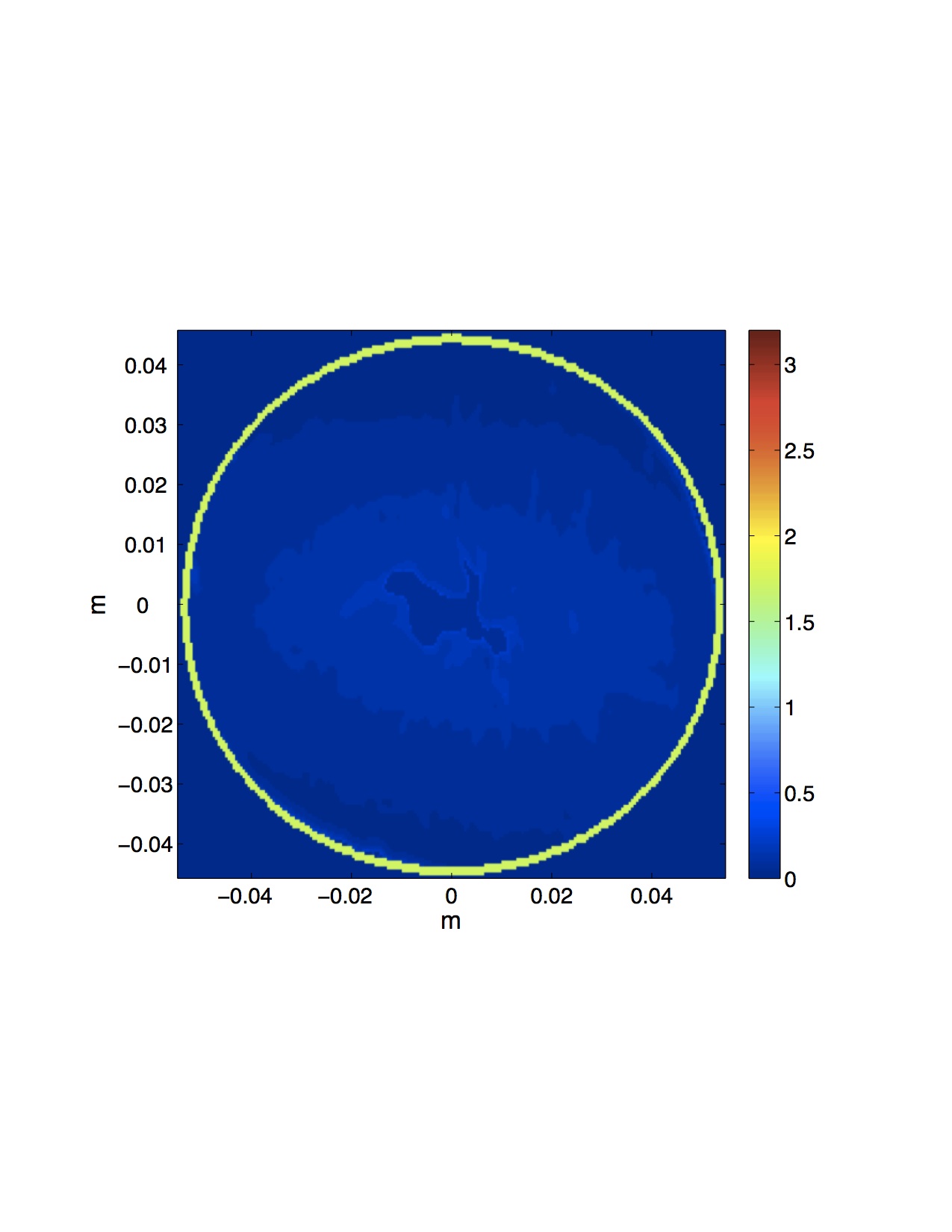, width=6.0cm}\\
$\ \ \ \ \ $ (a) & $\ \ \ \ \ $ (b) \\[0.35cm]
\psfig{figure=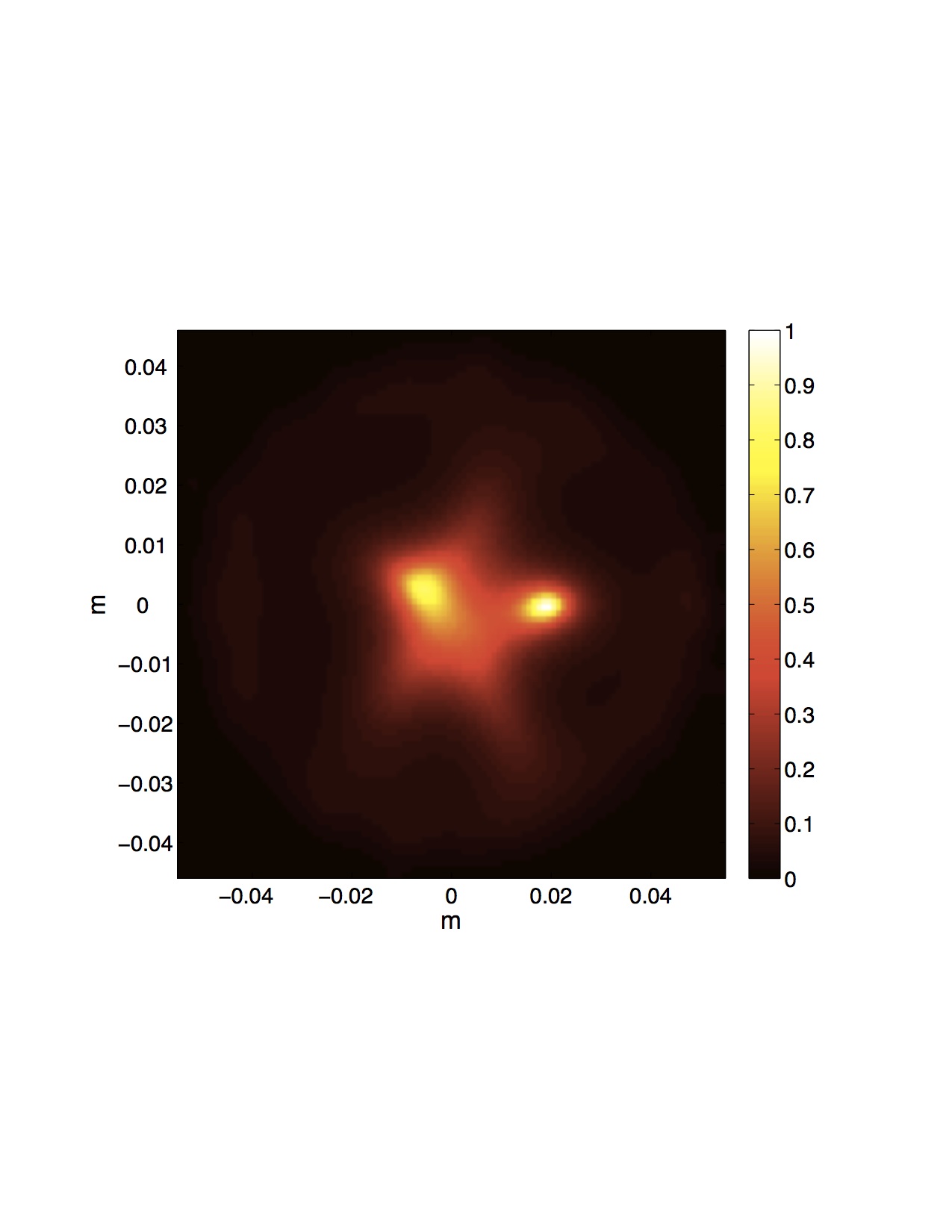, width=6.0cm} &
\psfig{figure=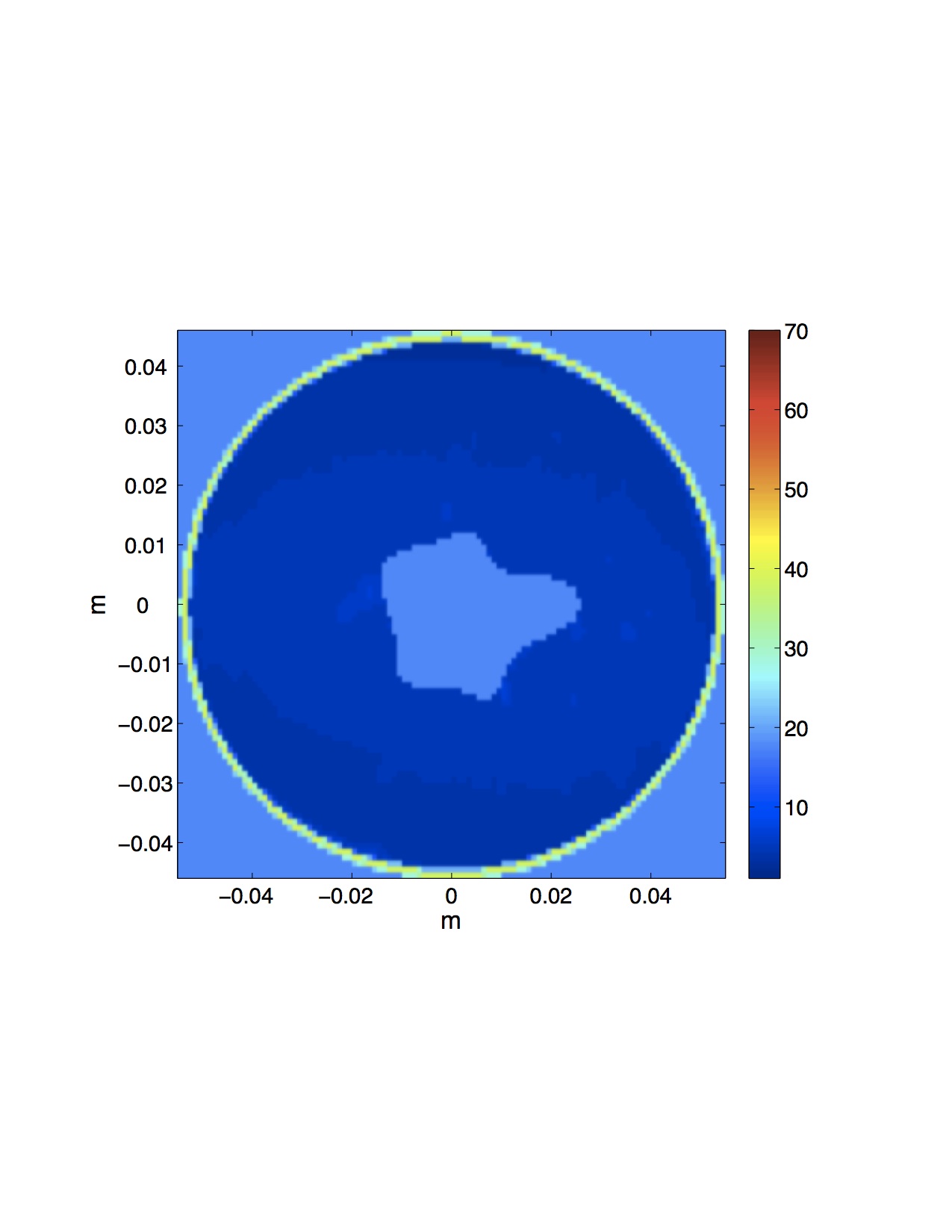, width=6.0cm}\\
$\ \ \ \ \ $ (c) & $\ \ \ \ \ $ (d) \\[0.35cm]
\psfig{figure=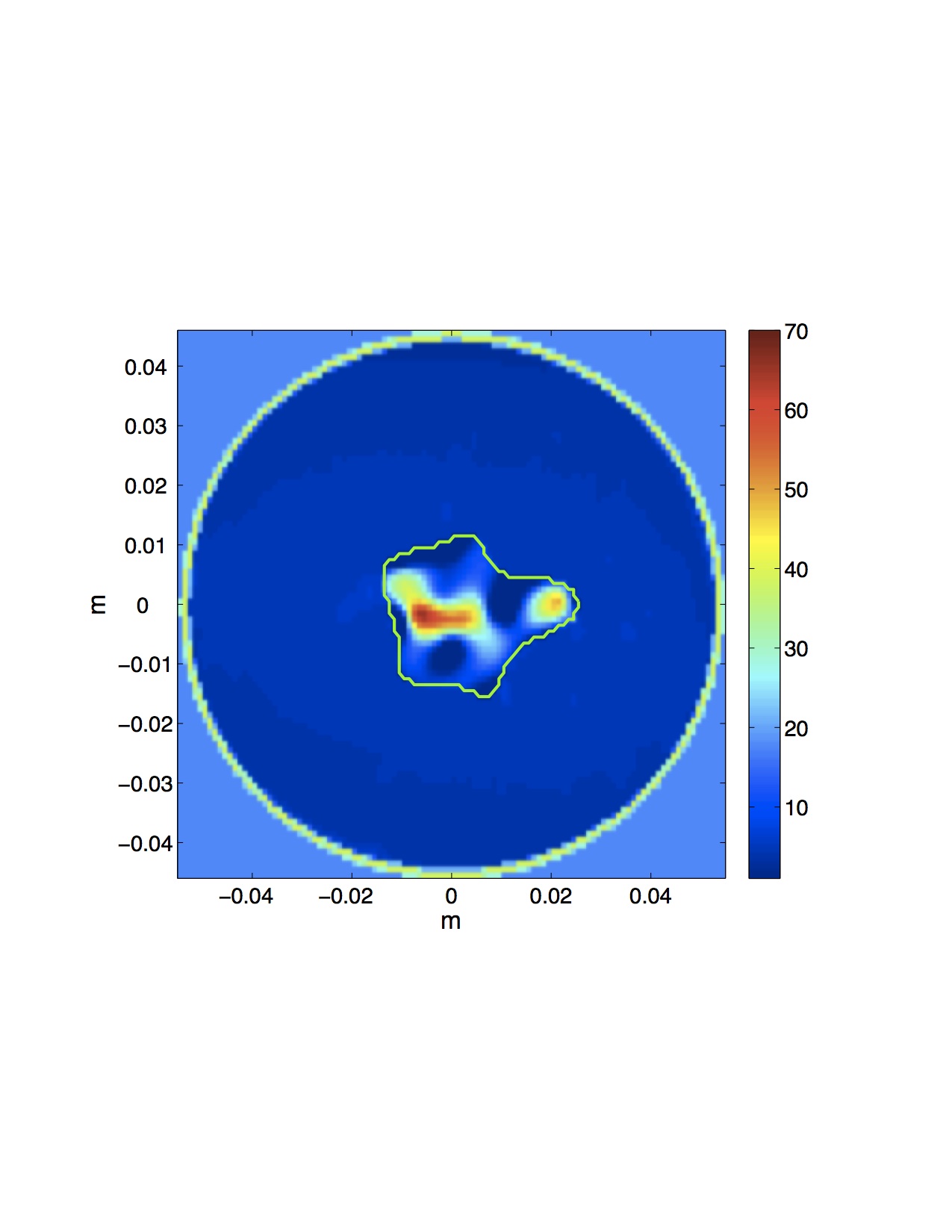, width=6.0cm} &
\psfig{figure=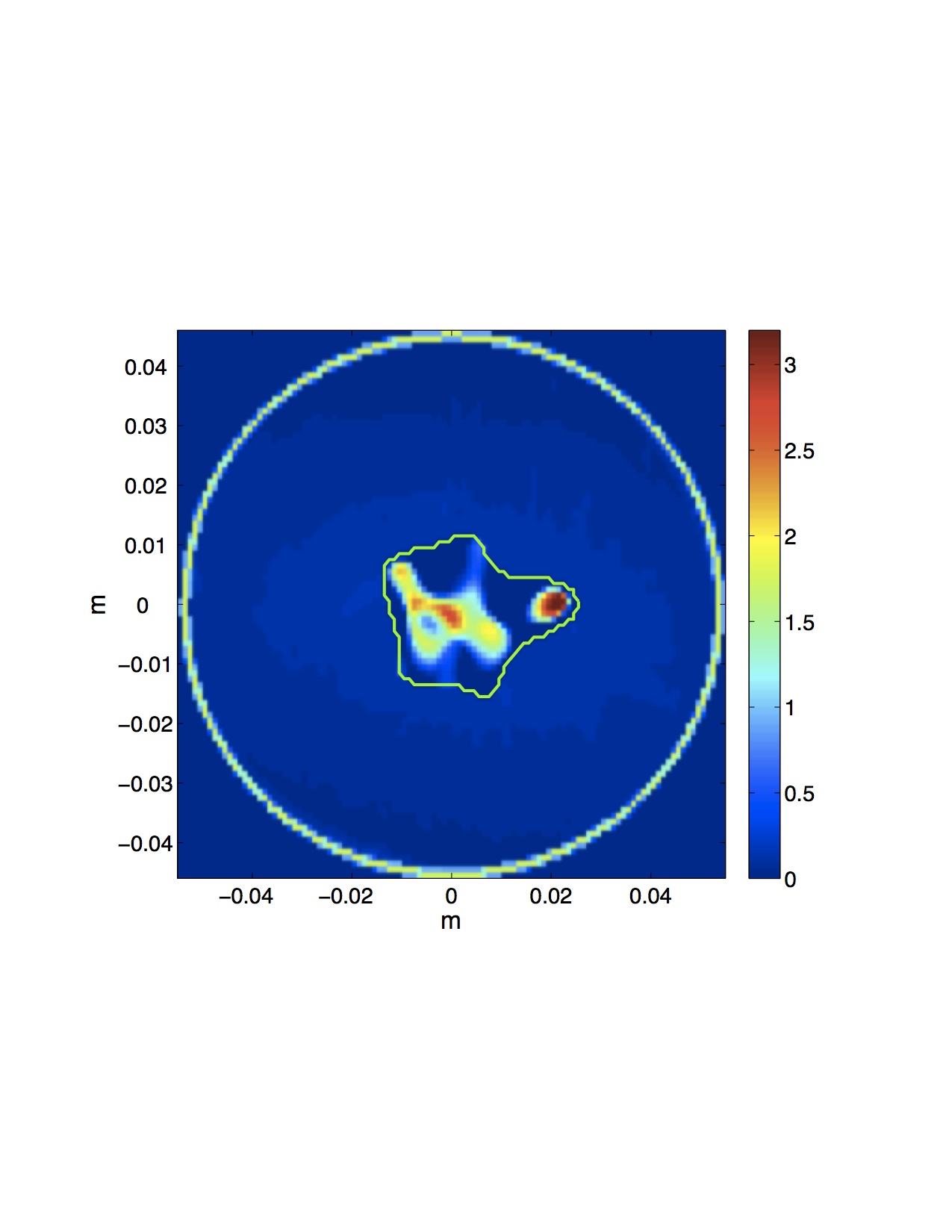, width=6.0cm}\\
$\ \ \ \ \ $ (e) & $\ \ \ \ \ $ (f) \\[0.35cm]
\end{tabular}
\caption{\label{breast_and_tumor}Reconstruction of the glandular and tumoral tissues by means of the hybrid approach. Panels (a) and (b):
pixel values of the relative permittivity $\varepsilon_r$ (a) and conductivity $\sigma$ (b) of the background utilized for the implementation of the NSLSM. Panel (c): visualization provided by the NSLSM. Panel (d): pixel values of the relative permittivity of the artificial background utilized for the implementation of the inhomogeneous CSI; the homogeneous inner domain $T$ having the same properties of the outer coupling
medium is obtained by applying an active contour technique to the visualization map shown in panel (c). Panel (e) and (f): relative permittivity (e) and
conductivity (f) as reconstructed by the inhomogeneous CSI; the region of reconstruction, delimited by a closed green line, coincides with the
investigation domain $T$ identified in panel (d).}
\end{center}
\end{figure}

In the second numerical example, we utilize the inhomogeneous CSI to investigate the nature of a small inhomogeneity detected by the NSLSM. In this case, while maintaining the coupling medium as before, the whole healthy breast slice, just as provided by the segmentation of the MRI image, is assumed as background, and the reconstruction is performed only on the pixels where the presence of the (prospective) tumor is highlighted (for a detailed analysis of the performance of the NSLSM in this framework, see \cite{bobrpa10}).
As before, panels (a), (b) and (c) of Figure \ref{tumor} are concerned with the NSLSM: (a) and (b) are the plots of the relative permittivity and
conductivity of the background, while (c) shows the NSLSM output, whence the investigation domain $T$ (containing the inhomogeneity to be analyzed) is extracted by using active contours. In panel (d) of Figure \ref{tumor} we plot the relative permittivity of the
artificial background utilized for the inhomogeneous CSI: the coupling medium is now inserted into the area $T$ detected by the NSLSM.
Finally, panels (e) and (f) show the reconstructions of the relative permittivity and conductivity provided by the inhomogeneous CSI inside $T$: again, a comparison with the true phantoms of Figure \ref{breast_exact} highlights that the quality of these reconstructions is rather good.

\begin{figure}
\begin{center}
\begin{tabular}{cc}
\psfig{figure=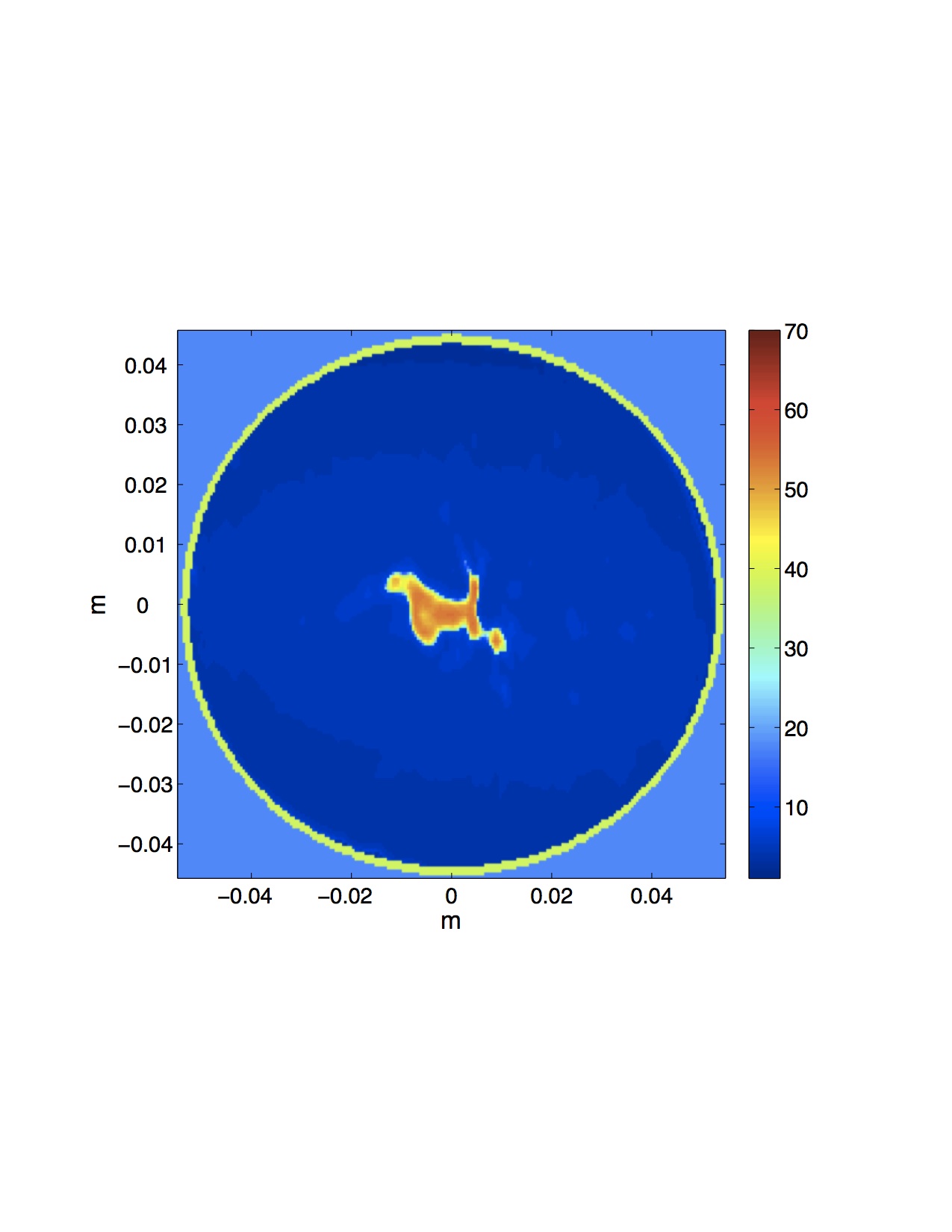, width=6.0cm} &
\psfig{figure=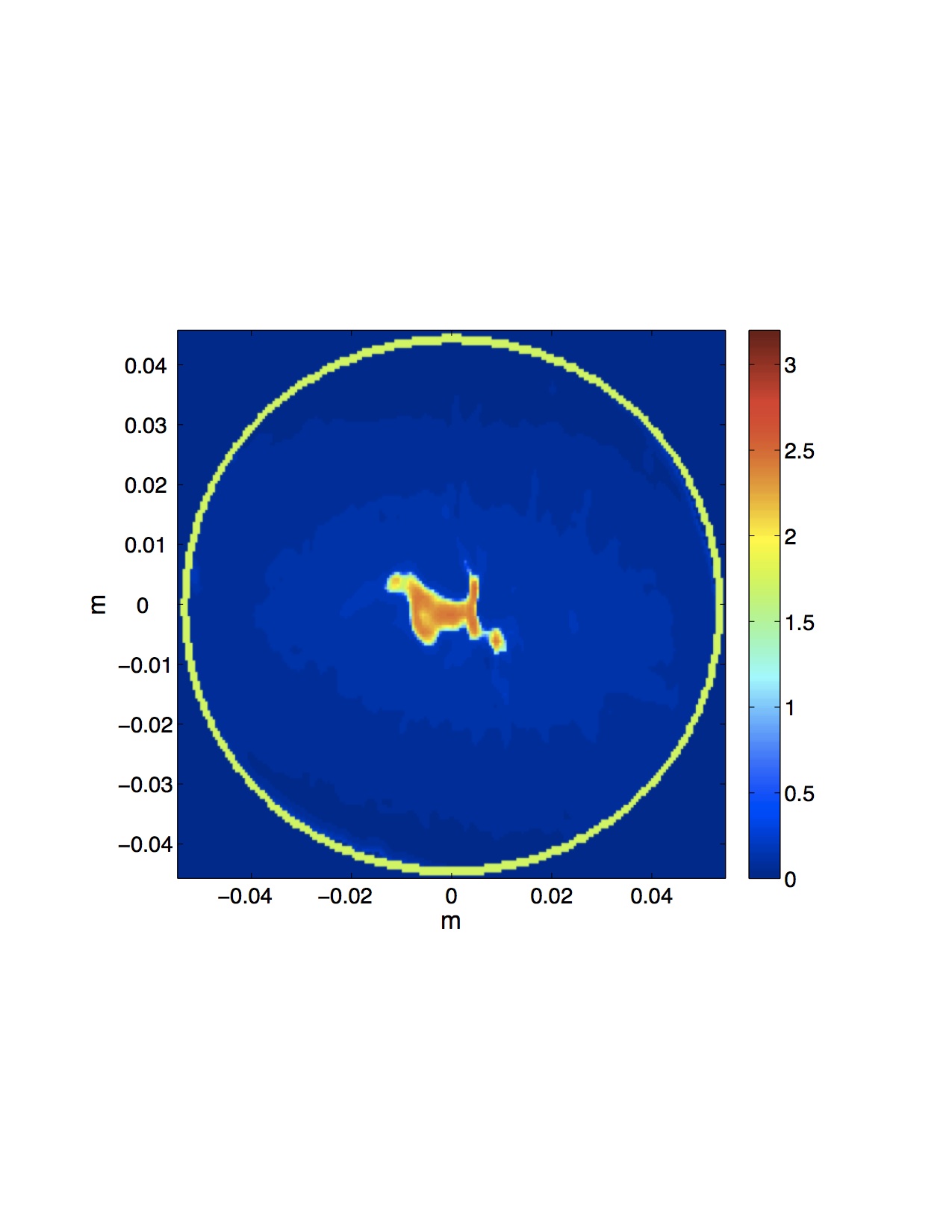, width=6.0cm}\\
$\ \ \ \ \ $ (a) & $\ \ \ \ \ $ (b) \\[0.35cm]
\psfig{figure=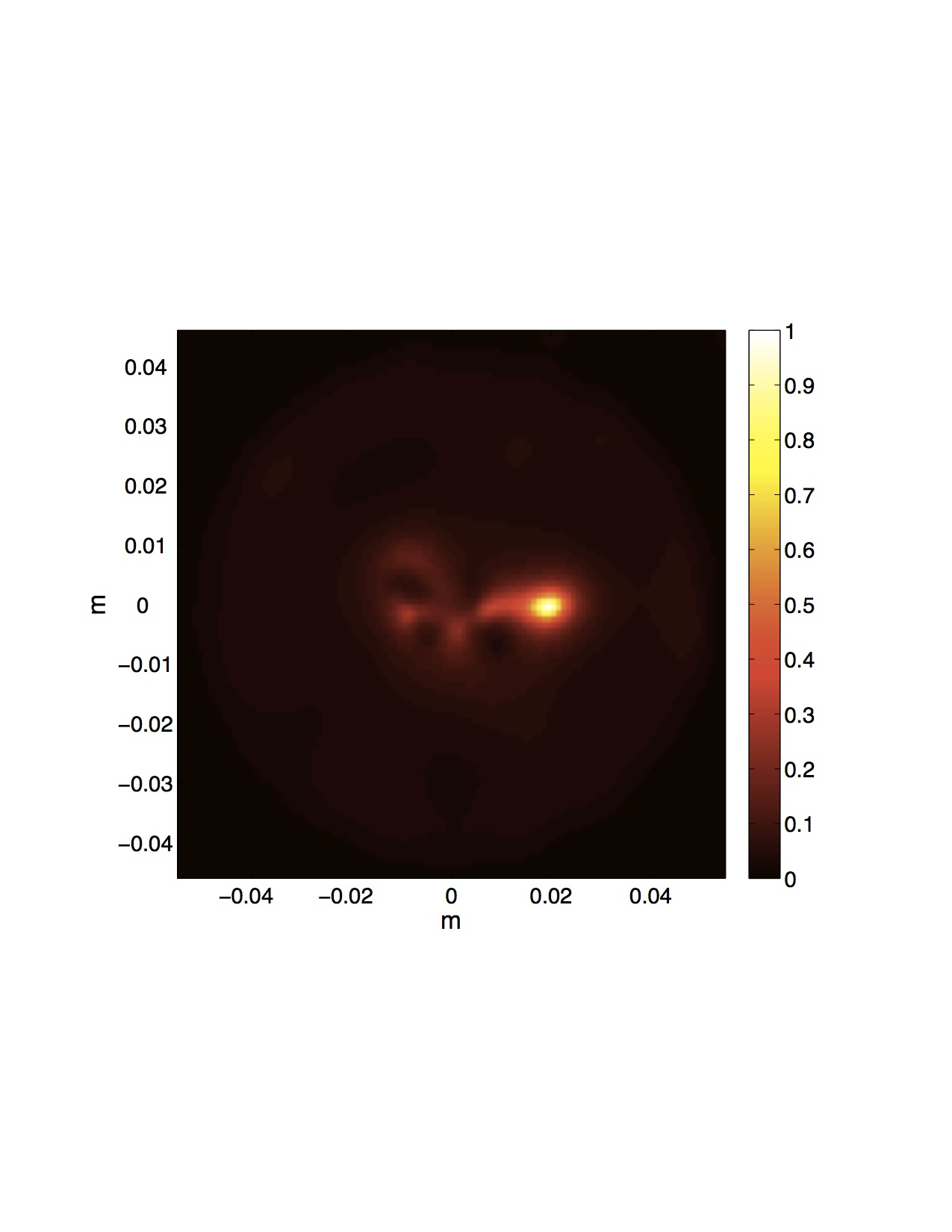, width=6.0cm} &
\psfig{figure=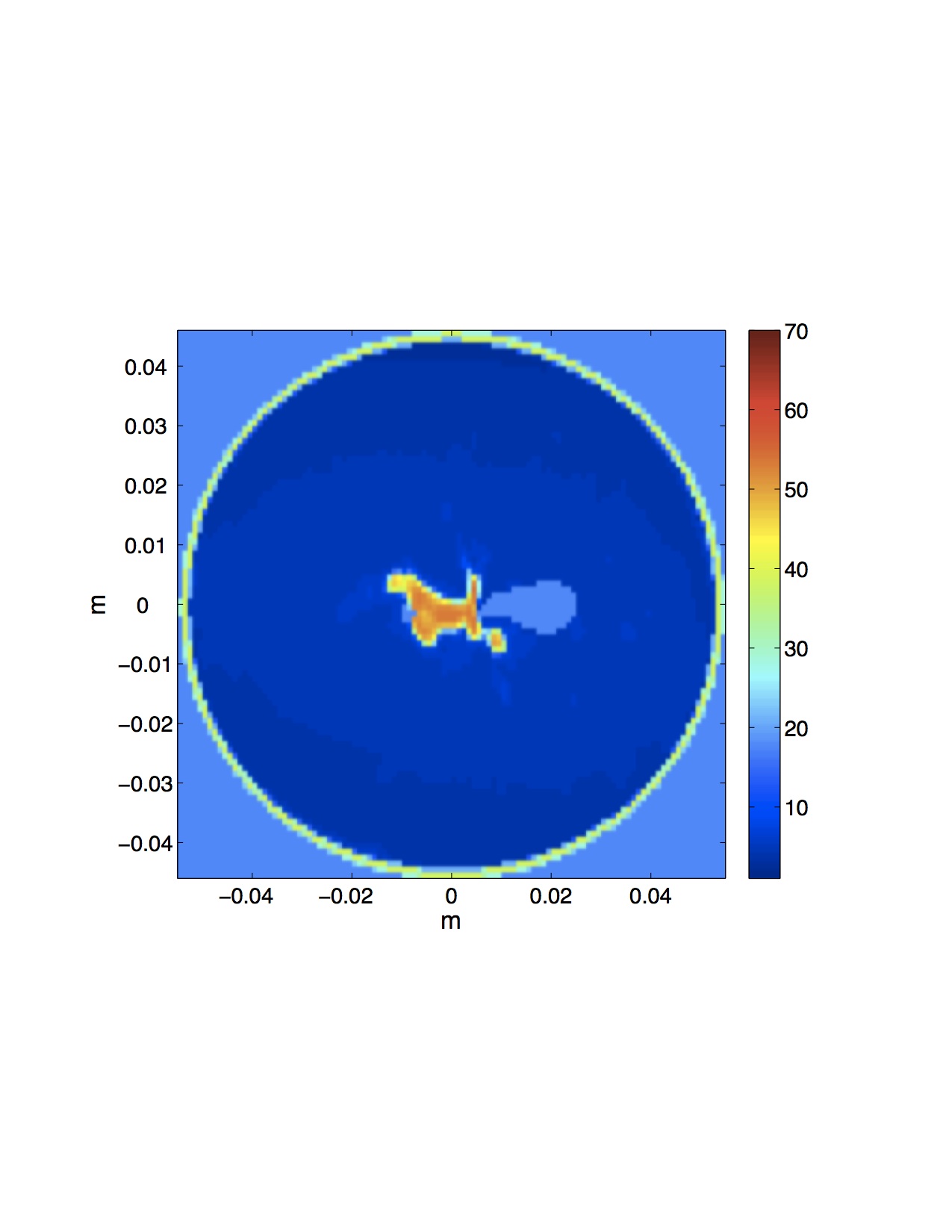, width=6.0cm}\\
$\ \ \ \ \ $ (c) & $\ \ \ \ \ $ (d) \\[0.35cm]
\psfig{figure=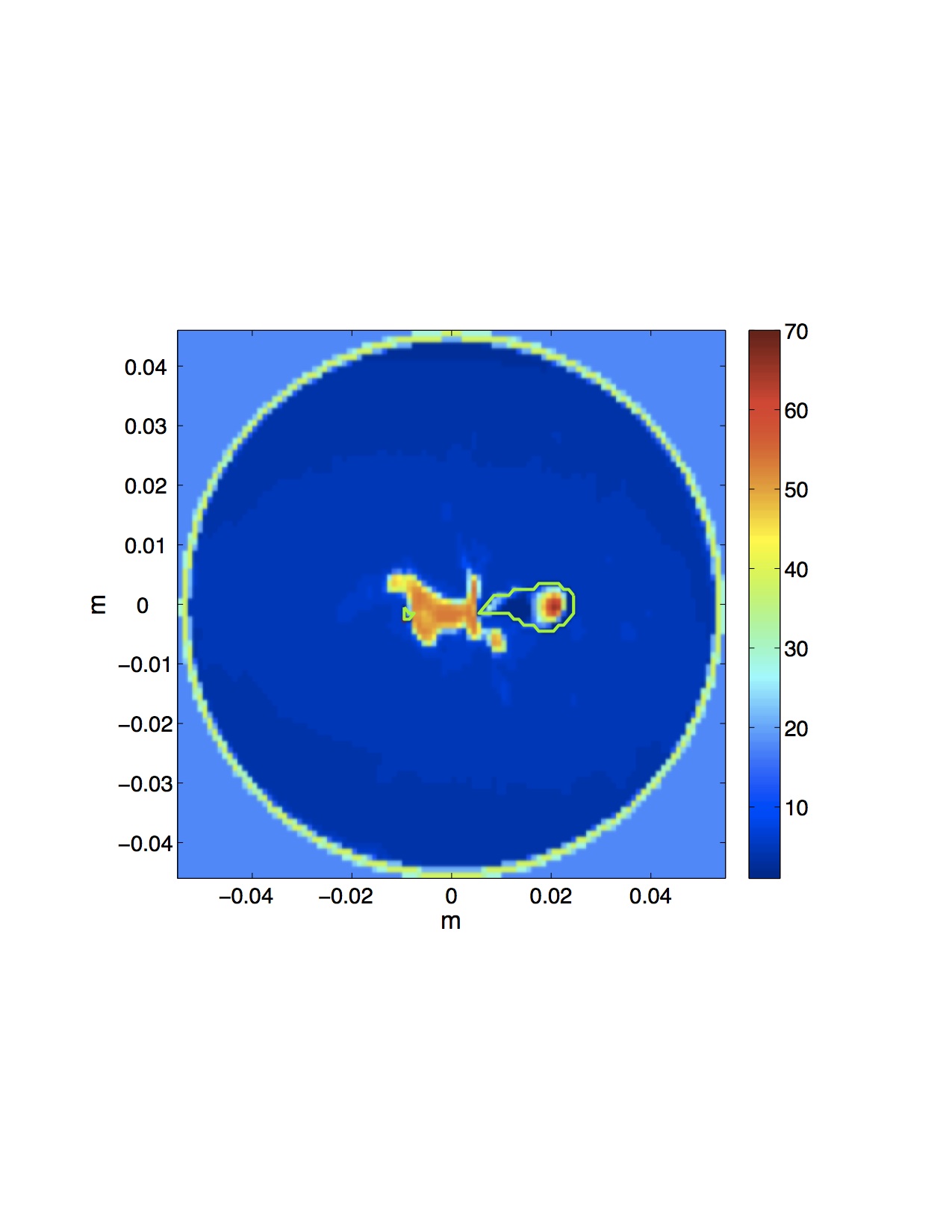, width=6.0cm} &
\psfig{figure=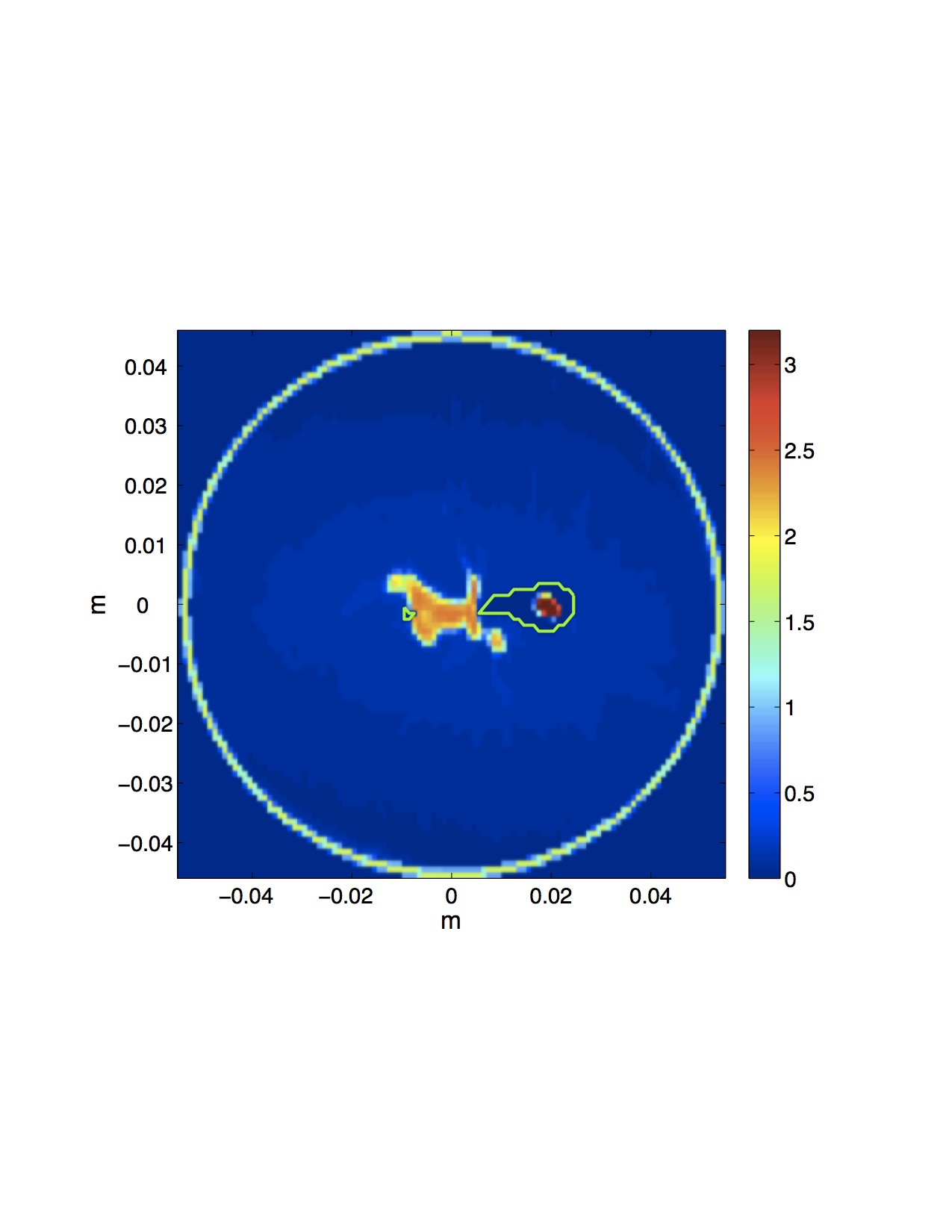, width=6.0cm}\\
$\ \ \ \ \ $ (e) & $\ \ \ \ \ $ (f) \\[0.35cm]
\end{tabular}
\caption{\label{tumor}Reconstruction of just the tumoral tissue by means of the hybrid approach. Panels (a) and (b):
pixel values of the relative permittivity $\varepsilon_r$ (a) and conductivity $\sigma$ (b) of the background utilized for the implementation of the
NSLSM. Panel (c): visualization provided by the NSLSM. Panel (d): pixel values of the relative permittivity of the artificial
background utilized for the implementation of the inhomogeneous CSI; the (non-connected) homogeneous inner domain $T$ having the same properties of the
outer coupling medium is obtained by applying an active contour technique to the visualization map shown in panel (c). Panel (e) and (f): relative
permittivity (e) and conductivity (f) as reconstructed by the inhomogeneous CSI;
the region of reconstruction, delimited by two closed green lines (a larger one on the right and a very small one on the left of the glandular tissue), coincides with the investigation domain $T$ identified in panel (d).}
\end{center}
\end{figure}

\section{Conclusions and further developments}\label{zioconcluso}

In this paper we have proposed a hybrid method to numerically solve a large class of two-dimensional inverse scattering problems.
According to this hybrid approach, a qualitative method, i.e., the NSLSM, is first applied to approximately identify the region of primary interest
inside an inhomogeneous background; then, a quantitative algorithm, i.e., the inhomogeneous CSI, is implemented only in this region to compute the
point values of the electric parameters (permittivity and conductivity) of the unknown scatterer.
Both steps are made possible by the knowledge of the inhomogeneous background: inserting such information in the whole procedure improves the quality
of the reconstruction and optimizes the number of data necessary for a satisfactory result, as shown by the numerical simulations
in Section \ref{zionumerico}.
The first (qualitative) step consists of a no-sampling implementation of the LSM for an inhomogeneous background, which can be performed according
to the guidelines recalled in Subsection \ref{basta} and detailed in the papers cited there. Instead, the second (quantitative) step requires a
preliminary theoretical investigation, in order to extend the usual Lippmann-Schwinger equation to the case of an inhomogeneous background.
Such a generalization has been obtained in Section \ref{zioinomogeneo} as a consequence
of the differential formulation of the scattering problem: under appropriate assumptions on the refractive index and on the magnetic permeability,
the new integral equation maintains its simplest form, i.e., involves no boundary terms across the discontinuities of the electric
parameters. Accordingly, the homogeneous and inhomogeneous Lippmann-Schwinger equations have the same structure: this has suggested the idea of
generalizing the CSI algorithm, originally based on the former equation, to the case of an inhomogeneous background, handled by the latter.

As regards possible further developments, a first task is trying to prove the exact equivalence between differential and integral 
formulation of the scattering problem (possibly by adopting a variational approach, i.e., by interpreting system 
(\ref{scatteringnear}) in a weak sense and looking for its solution in an appropriate Sobolev space, cf. \cite{ki11}): 
as pointed out in Section \ref{zioinomogeneo},
such equivalence is likely to hold. As a generalization, the case of non-constant magnetic permeability $\mu(x)$ would also deserve 
an analogous investigation: the corresponding integral formulation would be more
complex, owing to the presence of boundary terms across the discontinuities of $\mu(x)$. Of course, the same problems can also be 
formulated in a three-dimensional framework. An ideal goal might be to parallel and generalize to the case of
an inhomogeneous background the results obtained in \cite{ma03} for a homogeneous one. This would allow extending our hybrid approach 
to very general three-dimensional inverse scattering problems, where the issues of a good balance between the amount of data and the 
number of unknowns, as well as the related problem of computational costs, are even more important than in two-dimensional set-ups.

\makeatletter
\bibliography{Aramini_bibliography_2012}
\bibliographystyle{siam}
\end{document}